\theoremstyle{plain} 
\newtheorem{Thm}{Theorem}[section]
\newtheorem{Lem}[Thm]{Lemma}
\newtheorem{Prop}[Thm]{Proposition}
\theoremstyle{definition} 
\newtheorem{definition*}{定義}
\newtheorem{Def}[Thm]{Definition}
\newtheorem{Rem}[Thm]{Remark}
\newtheorem{Ex}[Thm]{Example}
\begin{document}
	
	\title[Higher dimensional case of sharper estimate]{Sharper estimates of Ohsawa--Takegoshi $L^2$-extension theorem in higher dimensional case}
	
	\author[S. Kikuchi]{Shota Kikuchi} 
	
	\subjclass[2020]{ 
		32A10, 32U35.
	}
	%
	\keywords{ 
		Ohsawa--Takegoshi $L^2$ extension theorem, pluricomplex Green functions
	}
	\address{
		Graduate School of Mathematics, Nagoya University, Furocho, Chikusa-ku, Nagoya, 464-8602, Japan
	}
	\email{m16015w@math.nagoya-u.ac.jp}
	\maketitle
	\begin{abstract}
	In \cite{H1}, Hosono obtained sharper estimates of the Ohsawa--Takegoshi $L^2$-extention theorem by allowing the constant depending on the weight function for a domain in $\mathbb{C}$. 
In this article, we show the higher dimensional case of sharper estimates of the Ohsawa--Takegoshi $L^2$-extention theorem. 
	To prove the higher dimensional case of them, we establish an analogue of Berndtsson--Lempert type $L^2$-extension theorem by using the pluricomplex Green functions with poles along subvarieties.  
As a special case, we consider the sharper estimates in terms of the Azukawa pseudometric and show that the higher dimensional case of sharper estimate provides the $L^2$-minimum extension for radial case. 
	\end{abstract}
	
\section{Introduction}
	
	The {\it{Ohsawa--Takegoshi $L^2$-extension theorem}} \cite{OT} states the following: Let $\Omega \subset \mathbb{C}^n$ be a bounded pseudoconvex domain, $V$ a closed complex submanifold of $\Omega$ and $\varphi$ a plurisubharmonic function on $\Omega$. Suppose that a holomorphic function $f \in \mathcal{O}(V)$ is given. 
	Then there exists a holomorphic function $F \in \mathcal{O}(\Omega)$ satisfying the $L^2$-estimate $\int_{\Omega}|F|^2 e^{-\varphi} \le C \int_V |f|^2 e^{-\varphi}$, where $C$ is the positive constant that is independent of the weight $\varphi$ and a given $f \in \mathcal{O}(V)$. 

	In \cite{BL}, \cite{Blo} and \cite{GZ}, the {\it{optimal $L^2$-extension theorem}} was proved. This means that we can determine the positive constant $C$ in the  best possible way. 
	In particular, Berndtsson--Lempert \cite{BL} proved the optimal $L^2$-estimate under the assumption that there exists a negative plurisubharmonic function $G$ on $\Omega$ such that 
	\begin{equation}\label{Greentype-condition}
	\log d_V^2(z) - B(z) \le G(z) \le \log d_V^2(z) + A(z), 
	\end{equation}
where $d_V(z)$ is the distance between $z \in \Omega$ and $V$, $A(z)$ and $B(z)$ are continuous functions on $\Omega$. 
	The function $B(z)$ appears in the $L^2$-estimate as follows:   
	A negative plurisubharmonic function $G$ satisfying $(\ref{Greentype-condition})$ is called a Green-type function on $\Omega$ with poles along $V$ \cite{H1}.  
	Characterizing pairs $(\Omega,V)$ of a domain $\Omega \subset \mathbb{C}^n$ and a closed subvariety $V \subset \Omega$ admitting a Green-type function with poles along $V$ in terms of their geometry is an open problem.  

	Hosono proposed in \cite{H1} an idea of getting an $L^2$-estimate sharper than the one of Berndtsson--Lempert type $L^2$-extension theorem by allowing constants depending on weight functions. 
	In other words, we can determine the positive constant $C'$ depending on weight functions sharper than the optimal constant $C$. 
	In this article, we call this result {\it{Hosono's sharper estimate}}. 
	Specifically, the idea used in the proof of Hosono's sharper estimate is as follows: 
	Let $\Omega \subset \mathbb{C}$ be a bounded domain containing $0$ and $\varphi$ a subharmonic function with $\varphi(0)=0$. 
	At first, we construct the pseudoconvex domain $\tilde{\Omega}=\{(z,w) \in \mathbb{C}^{2} : z \in \Omega, |w|^2 < e^{-\varphi(z)} \}$ in $\mathbb{C}^2$. 
	For the pseudoconvex domain $\tilde{\Omega}$ and a closed complex submanifold $\tilde{V}=\{z=0\}$ of it, we construct the Green-type function $\tilde{G}$ using solutions of the Dirichlet problem for complex Monge--Amp\`{e}re equation. 
	By using Berndtsson--Lempert type $L^2$-extension theorem for $\tilde{\Omega}$, $\tilde{V}$ and $\tilde{G}$, we can get an $L^2$-estimate sharper than the one of Berndtsson--Lempert type $L^2$-extension theorem. 
	As an application of Hosono's sharper estimate, in the case where $\Omega$ is the unit disc $\{|z|<1\}$ in $\mathbb{C}$ and $\varphi$ is a radial subharmonic function, 
	Hosono was able to determine the $L^2$-minimum extension of the function $1$ on the subvariety $\{0\}$.  

	In this article, for a bounded pseudoconvex domain in $\mathbb{C}^n$ and a closed complex submanifold of it with some conditions, we generalize Hosono's sharper estimate.
	In general, for a closed submanifold, it is difficult to construct the Green-type function because we do not know whether the logarithmic distance from a closed complex submanifold is plurisubharmonic. 
	To generalize this result, we establish an analogue of Berndtsson--Lempert type $L^2$-extension theorem by using the theory of the pluricomplex Green function with poles along subvarieties \cite{Ras}. Our first main theorem is as follows. 

\begin{Thm}[Theorem \ref{theoremOT}]\label{Improve}

	Let $\Omega$ be a bounded pseudoconvex domain in $\mathbb{C}^n$, $V$ a closed complex submanifold of $\Omega$ with codimension $k$ such that $V$ has bounded global generators $\psi=(\psi_1,\ldots,\psi_k)$ and there exists a positive constant $C$ such that $\displaystyle \frac{1}{C} \le |J_{\psi}|$ near $V$ where $J_{\psi}$ is a Jacobian of $\psi$ for suitable coordinates. 
	We state about $J_{\psi}$ in Section 2.
	Let $\varphi$ be a plurisubharmonic function on $\Omega$ and $G_{\Omega,V}$ the pluricomplex Green function on $\Omega$ with poles along $V$. 
	Assume that there exist some continuous function $B$ on $\Omega$ such that  
	\begin{equation*}
	\log |\psi(z)| - B(z) \le G_{\Omega,V}(z). 
	\end{equation*}
	Then for any holomorphic function $f$ on $V$ with $\displaystyle \int_V |f|^2 e^{-\varphi + 2kB} < \infty$, there exists a holomorphic function $F$ on $\Omega$ such that $F|_V = f$ and
	\begin{equation*}
	\int_\Omega |F|^2 e^{-\varphi} \le C \sigma_k \int_V |f|^2 e^{-\varphi + 2kB}, 
	\end{equation*}
where $\sigma_k$ is the volume of the unit ball in $\mathbb{C}^k$. 
\end{Thm}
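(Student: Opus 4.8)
The plan is to reduce the theorem to the Berndtsson--Lempert type $L^2$-extension theorem established earlier for the pluricomplex Green function, and then to convert its potential-theoretic constant into the explicit constant $C\sigma_k$ by a volume estimate on the sublevel sets of $G_{\Omega,V}$. First I would apply the analogue of the Berndtsson--Lempert $L^2$-extension theorem with weight $\varphi$ and with $G_{\Omega,V}$ playing the role of the Green-type function. This produces a holomorphic extension $F$ of $f$ whose $L^2$-norm is controlled by a right-hand side of the form $\int_V |f|^2 e^{-\varphi}\,d\mu_V$, where the measure $d\mu_V$ on $V$ is the residue-type measure built from the asymptotics of $G_{\Omega,V}$ along $V$; concretely, $d\mu_V$ is the weak limit of the normalized sublevel-set measures $e^{-2kt}\,\mathbf{1}_{\{G_{\Omega,V}<t\}}\,dV$ as $t\to-\infty$. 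The whole problem then becomes bounding this density from above by $C\sigma_k e^{2kB}$ times the induced volume of $V$.

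Second, I would exploit the hypothesis $\log|\psi(z)|-B(z)\le G_{\Omega,V}(z)$. It yields the inclusion $\{G_{\Omega,V}<t\}\subset\{\,|\psi|<e^{t+B}\,\}$, so each sublevel set is squeezed inside a tube of transverse radius $e^{t+B}$ around $V$. Working locally near a point of $V$ and using continuity of $B$ to freeze its value up to an arbitrarily small error, the volume of this tube can be computed in the coordinates adapted to $\psi$. The Jacobian hypothesis $\tfrac{1}{C}\le|J_\psi|$ then enters through the change of variables $w=\psi(z)$: the map $\psi$ sends the tube $\{|\psi|<e^{t+B}\}$ into the ball $\{|w|<e^{t+B}\}\subset\mathbb{C}^k$, and the lower bound on $|J_\psi|$ gives the upper bound $C\sigma_k e^{2k(t+B)}$ on the transverse volume, since $\mathrm{vol}(\{|w|<r\})=\sigma_k r^{2k}$. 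Dividing by $e^{2kt}$ and letting $t\to-\infty$ converts this into the density bound $d\mu_V\le C\sigma_k e^{2kB}\,dV_V$, which, plugged into the estimate of the first step, produces exactly $\int_\Omega|F|^2e^{-\varphi}\le C\sigma_k\int_V|f|^2e^{-\varphi+2kB}$.

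I expect the main obstacle to be the first step: correctly formulating and applying the Berndtsson--Lempert type extension theorem for the pluricomplex Green function $G_{\Omega,V}$ with poles along a submanifold, and identifying its optimal constant with the normalized volume asymptotics of the sublevel sets. In particular, one must check that $G_{\Omega,V}$ has precisely the singularity $\log|\psi|$ that makes the exponent $2k$ (matching the real transverse dimension of $V$) appear, and that the residue measure is governed, as claimed, by $e^{-2kt}\,\mathbf{1}_{\{G_{\Omega,V}<t\}}$. Here the role of the one-sided bound is exactly right: since $G_{\Omega,V}\ge\log|\psi|-B$ shrinks the sublevel sets, it yields an upper bound on the extension constant, which is the inequality we want. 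The remaining steps --- the inclusion of sublevel sets and the Jacobian change of variables --- are then routine geometric estimates once this identification is in place.
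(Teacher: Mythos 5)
Your proposal follows essentially the same route as the paper: the paper reruns the Berndtsson--Lempert/Hosono duality-and-convexity argument (the functionals $\xi_g$, the weights $\varphi_{t,p}=\varphi+p\max\{G_{\Omega,V}-\tfrac{t}{2},0\}$, Berndtsson's convexity theorem, and the monotonicity of $e^{-kt/2}\Vert F_{t,p}\Vert$) precisely to reduce the optimal constant to the sublevel-set volume asymptotics you describe, and then proves your second step as Lemma~\ref{L^2estimate} via the inclusion $\{G_{\Omega,V}<\tfrac{t}{2}\}\subset\{|\psi|<e^{\frac{t}{2}+B}\}$ and the change of variables $w=\psi(z)$ with the Jacobian bound, yielding the transverse volume $C\sigma_k e^{2k(\frac{t}{2}+B)}$. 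The one caveat is that your ``first step'' cannot be an appeal to an already-established theorem --- Berndtsson--Lempert require a two-sided Green-type bound, whereas here only the one-sided bound is assumed --- so that step is the bulk of the actual proof, carried out by following \cite{H3}, exactly as you anticipated in flagging it as the main obstacle.
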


	\noindent
	In Section $2$, we give a geometric interpretation for the assumptions of Theorem \ref{Improve}. 

	Under the conditions in Theorem \ref{Improve}, we consider a pseudoconvex domain $\tilde{\Omega}$ in $\mathbb{C}^{n+k}$ defined by 
	\begin{equation*}
	\tilde{\Omega} = \{(z,w) \in \mathbb{C}^{n+k} : z \in \Omega, |w|^2 < e^{-\frac{\varphi(z)}{k}} \} 
	\end{equation*}
and a closed complex submanifold $\tilde{V}$ of $\tilde{\Omega}$ such that
	\begin{equation*}
	\tilde{V} = \{\tilde{\psi}_1 = \cdots = \tilde{\psi}_k = 0 \}, 
	\end{equation*}
where $\tilde{\psi}_i(z,w) := \psi_i(z) $ are holomorphic functions on $\tilde{\Omega}$. 
	Let $\tilde{G}$ be the pluricomplex Green function on $\tilde{\Omega}$ with poles along $\tilde{V}$ or a subsolution of it such that there exists a continuous function $\tilde{B}(z,w)$ on $\tilde{\Omega}$ such that 
	\begin{equation*}
	\log|\tilde{\psi}(z,w)| - \tilde{B}(z,w) \le \tilde{G}(z,w).
	\end{equation*}
	Then, by using Theorem \ref{Improve}, we can obtain the following higher dimensional case of Hosono's sharper estimate. 

\begin{Thm}[Theorem \ref{theoremH}]\label{Higher}

	Under the above setting, the following statements hold.   

	$(1)$ For any holomorphic function $f$ on $V$ with $\displaystyle \int_V |f|^2 e^{-\varphi+2kB} < \infty$, there exists a holomorphic function $F$ on $\Omega$ such that $F|_V = f$ and
	\begin{equation*}
	\int_{\Omega} |F|^2 e^{-\varphi} \le C \int_{\tilde{V}} |\tilde{f}|^2 e^{2k\tilde{B}}, 
	\end{equation*}
where C is a positive constant determined from Theorem \ref{Improve} and the holomorphic function $\tilde{f}$ on $\tilde{V}$ is defined by $\tilde{f}(z,w) := f(z)$. 

	$(2)$ Suppose that $\tilde{\Omega}$ is a strictly pseudoconvex domain and $-B(z)$ is a plurisubharmonic function. Then one can make the estimate in $(1)$ strictly sharper than the one in Theorem \ref{Improve}, i.e., there exist functions $\tilde{G}$ and $\tilde{B}$ satisfying the above conditions such that the estimate
	\begin{equation*}
	\int_{\tilde{V}} |\tilde{f}|^2 e^{2k\tilde{B}} < \sigma_k \int_V |f|^2 e^{-\varphi + 2kB} 
	\end{equation*}
holds.
\end{Thm}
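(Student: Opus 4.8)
The plan is to derive both parts of Theorem \ref{Higher} from Theorem \ref{Improve} applied to the lifted data on $\tilde{\Omega}$, combined with a fiberwise sub-mean value inequality. For part $(1)$, I first check that $(\tilde{\Omega}, \tilde{V}, \tilde{\psi})$ satisfies the hypotheses of Theorem \ref{Improve} with the \emph{trivial} weight $\tilde{\varphi} \equiv 0$: the functions $\tilde{\psi}_i(z,w) = \psi_i(z)$ are bounded global generators of $\tilde{V}$, and since they do not involve $w$ the Jacobian $J_{\tilde{\psi}}$ coincides with $J_{\psi}$, so $\tfrac{1}{C} \le |J_{\tilde{\psi}}|$ holds near $\tilde{V}$ with the same constant $C$; the bound $\log|\tilde{\psi}| - \tilde{B} \le \tilde{G}$ is the standing assumption. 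We may assume $\int_{\tilde{V}} |\tilde{f}|^2 e^{2k\tilde{B}} < \infty$ (otherwise the asserted estimate is vacuous), so Theorem \ref{Improve} applies to the lift $\tilde{f}(z,w) = f(z)$ and produces a holomorphic $\tilde{F}$ on $\tilde{\Omega}$ with $\tilde{F}|_{\tilde{V}} = \tilde{f}$ and $\int_{\tilde{\Omega}} |\tilde{F}|^2 \le C\sigma_k \int_{\tilde{V}} |\tilde{f}|^2 e^{2k\tilde{B}}$.

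Next I descend to $\Omega$ by setting $F(z) := \tilde{F}(z,0)$, which is holomorphic on $\Omega$ since $(z,0) \in \tilde{\Omega}$ for every $z \in \Omega$, and which satisfies $F|_V = f$ because $(z,0) \in \tilde{V}$ and $\tilde{f}(z,0) = f(z)$ for $z \in V$. The key point is the sub-mean value inequality for the plurisubharmonic function $w \mapsto |\tilde{F}(z,w)|^2$ over the fiber $B_z = \{|w|^2 < e^{-\varphi(z)/k}\}$, a ball of volume $\sigma_k e^{-\varphi(z)}$, which gives
\[ |F(z)|^2 e^{-\varphi(z)} = |\tilde{F}(z,0)|^2 e^{-\varphi(z)} \le \frac{1}{\sigma_k} \int_{B_z} |\tilde{F}(z,w)|^2 \, dV(w). \]
Integrating in $z$ and applying Fubini yields $\int_{\Omega} |F|^2 e^{-\varphi} \le \tfrac{1}{\sigma_k} \int_{\tilde{\Omega}} |\tilde{F}|^2$, and combining with the previous estimate the two factors $\sigma_k$ cancel to give the constant $C$ claimed in $(1)$.

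For part $(2)$, the strategy is to exploit the freedom in the choice of $\tilde{G}$. I would first record the naive subsolution $\tilde{G}^{(0)}(z,w) := G_{\Omega,V}(z)$, which is negative and plurisubharmonic on $\tilde{\Omega}$ with the correct logarithmic pole along $\tilde{V}$ and for which $\tilde{B}^{(0)}(z,w) = B(z)$ is admissible; a direct Fubini computation over the fibers then gives exactly $\int_{\tilde{V}} |\tilde{f}|^2 e^{2k\tilde{B}^{(0)}} = \sigma_k \int_V |f|^2 e^{-\varphi + 2kB}$, i.e.\ \emph{equality} in the comparison. To obtain strict improvement I instead take $\tilde{G}$ to be the genuine pluricomplex Green function of the strictly pseudoconvex domain $\tilde{\Omega}$ with poles along $\tilde{V}$. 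As the largest negative plurisubharmonic competitor it satisfies $\tilde{G} \ge \tilde{G}^{(0)}$, and the inequality is strict: along the top boundary $\{|w|^2 = e^{-\varphi(z)/k}\}$ one has $\tilde{G} \to 0$ while $\tilde{G}^{(0)} = G_{\Omega,V}(z) < 0$, the extra room in the $w$-directions afforded by strict pseudoconvexity genuinely enlarging the Green function. A larger $\tilde{G}$ forces a strictly smaller admissible residue $\tilde{B}|_{\tilde{V}} \le B$ on a set of positive measure, whence $\int_{\tilde{V}} |\tilde{f}|^2 e^{2k\tilde{B}} < \int_{\tilde{V}} |\tilde{f}|^2 e^{2k\tilde{B}^{(0)}} = \sigma_k \int_V |f|^2 e^{-\varphi + 2kB}$.

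I expect the main obstacle to lie in part $(2)$: making rigorous that the strict increase $\tilde{G} > \tilde{G}^{(0)}$ survives at the level of the residue $\tilde{B}|_{\tilde{V}}$, and carrying out the resulting quantitative comparison of the fiber integrals. This is precisely where I anticipate using the regularity and boundary behavior of the Green function on the strictly pseudoconvex $\tilde{\Omega}$, together with the hypothesis that $-B$ is plurisubharmonic, the latter entering to control the residue and to guarantee that the pointwise comparison $\tilde{B}|_{\tilde{V}} < B$ integrates to the stated strict inequality.
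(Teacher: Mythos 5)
Part $(1)$ of your proposal is correct and is essentially the paper's argument: apply Theorem \ref{Improve} on $\tilde{\Omega}$ with trivial weight, restrict to $w=0$, and use the sub-mean value inequality on the fibers $\{|w|^2 < e^{-\varphi(z)/k}\}$ of volume $\sigma_k e^{-\varphi(z)}$ so that the two factors of $\sigma_k$ cancel. Your Fubini identity $\int_{\tilde{V}} |\tilde{f}|^2 e^{2kB(z)} = \sigma_k \int_V |f|^2 e^{-\varphi+2kB}$ is also exactly the baseline computation the paper performs in part $(2)$, and correctly identifies that everything reduces to a fiberwise comparison of $\int e^{2k\tilde{B}(z,w)}\,dV(w)$ against $\int e^{2kB(z)}\,dV(w)$.

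The gap is in how you get from a larger $\tilde{G}$ to a smaller $\tilde{B}$. The constraint linking the two is $\tilde{B} \ge \log|\tilde{\psi}| - \tilde{G}$, and the integral you must estimate lives on $\tilde{V}$ itself, where $\log|\tilde{\psi}| - \tilde{G}$ is a difference of two quantities tending to $-\infty$; a pointwise strict inequality $\tilde{G} > \tilde{G}^{(0)}$ off $\tilde{V}$ says nothing a priori about this residue in the limit, and you have not explained how to extract from the pluricomplex Green function of $\tilde{\Omega}$ a \emph{continuous} admissible $\tilde{B}$ at all, let alone one with $\tilde{B} < B$ on a set of positive measure in each fiber. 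You flag this as the anticipated obstacle, but it is precisely the content of part $(2)$. The paper resolves it by not working with the Green function abstractly: it solves the Bedford--Taylor Dirichlet problem (\cite[Theorem D]{BT}) for $(dd^c\tilde{u})^{n+k}=0$ with boundary data $-\max(\log|\tilde{\psi}|,D)$ for a sufficiently negative constant $D$ with $\max(\log|\psi|,D) < B$, sets $\tilde{G} := \log|\tilde{\psi}| + \tilde{u}$ (a subsolution, checked negative by the maximum principle), and takes $\tilde{B} := -\tilde{u}$, which is continuous up to $\bar{\tilde{\Omega}}$ by construction. The hypothesis that $-B$ is plurisubharmonic then enters through the comparison principle (\cite[Theorem A]{BT}): $\tilde{u}' := -B$ is a competitor with $(dd^c\tilde{u}')^{n+k} \ge 0 = (dd^c\tilde{u})^{n+k}$ and $\tilde{u}' \le \tilde{u}$ on $\partial\tilde{\Omega}$, so $\tilde{u} \ge -B$ everywhere, hence $\tilde{B} \le B$; and since $\tilde{u} = -D > -B$ on $\tilde{V}\cap\partial\tilde{\Omega}$ and $\tilde{u}$ is continuous, the strict inequality holds on a positive-measure subset of each fiber, giving the strict inequality of integrals. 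Without some such explicit construction your part $(2)$ does not close.
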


	\noindent
	As an application of Theorem \ref{Higher}, we can show that for a unit ball $\Omega = \mathbb{B}^n$ in $\mathbb{C}^n$, a closed complex submanifold $V = \{z_1 = \cdots = z_k = 0\} = \{z'=0\}$ and a radial plurisubharmonic function $\varphi$ with respect to $V$, i.e., $\varphi(z) = \varphi(|z'|)$, one can obtain the $L^2$-minimum extension of holomorphic functions $f$ on $V$. 

	In addition, we consider the sharper estimates in terms of the Azukawa pseudometric. 
	Specifically, we aim at the comparison with the result which was obtained in \cite{H2}. 
	To consider it, we prove the following result.  

\begin{Thm}[Theorem \ref{Azukawa2}]\label{Azukawa0}

	Let $\Omega$ be a bounded pseudoconvex domain in $\mathbb{C}^n$, $V$ be a closed complex submanifold defined by $V = \{ z_1 = \cdots = z_k = 0 \}$ and $\varphi$ be a plurisubharmonic function on $\Omega$. 
	Let $G_{\Omega,V}$ be the pluricomplex Green function on $\Omega$ with poles along $V$. 
	We assume that there exists the limit
	\begin{equation*}
	A_{\Omega, V, w}(X) := \lim_{\lambda \to 0} \left( G_{\Omega,V}(\lambda X, w) - \log|\lambda| \right), 
	\end{equation*}
where $(0,\ldots,0,w) \in V$ and $0 \neq X \in \mathbb{C}^k$. 
	We define $I_{\Omega, V, w} := \{X \in \mathbb{C}^n | A_{\Omega, V, w}(X) < 0 \}$. 
	Then for any holomorphic function $f$ on $V$ with $\int_V {\rm{vol}}(I_{\Omega, V, w}) |f^2| e^{-\varphi} < \infty$, there exists a holomorphic function $F$ on $\Omega$ such that $F|_V = f$ and
	\begin{equation*}
	\int_{\Omega} |F|^2 e^{-\varphi} \le \int_V {\rm{vol}}(I_{\Omega, V, w}) |f^2| e^{-\varphi}. 
	\end{equation*}	 
\end{Thm}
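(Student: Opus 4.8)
The plan is to identify the constant ${\rm vol}(I_{\Omega,V,w})$ as the density of the residue (Ohsawa) measure of $e^{-2kG_{\Omega,V}}$ along $V$ and to obtain the estimate from the optimal Berndtsson--Lempert type $L^2$-extension built on $G_{\Omega,V}$. First I fix coordinates $z=(z',w)$ with $z'=(z_1,\dots,z_k)\in\mathbb{C}^k$, so that $V=\{z'=0\}$, the generators may be taken to be $\psi_i(z)=z_i$, and consequently $|\psi(z)|=|z'|$ and $J_\psi\equiv 1$. The first observation is that $A_{\Omega,V,w}$ is logarithmically homogeneous, i.e. $A_{\Omega,V,w}(tX)=\log|t|+A_{\Omega,V,w}(X)$ for $t\neq 0$, which follows from the defining limit after the substitution $\mu=\lambda t$. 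Hence $I_{\Omega,V,w}=\{A_{\Omega,V,w}<0\}$ is the unit ball of the Azukawa metric $e^{A_{\Omega,V,w}}$, and writing $X=r\omega$ with $r>0$, $\omega\in S^{2k-1}$ and using $\{A_{\Omega,V,w}<0\}=\{r<e^{-A_{\Omega,V,w}(\omega)}\}$ gives the closed form
\begin{equation*}
{\rm vol}(I_{\Omega,V,w})=\frac{1}{2k}\int_{S^{2k-1}}e^{-2kA_{\Omega,V,w}(\omega)}\,d\sigma(\omega),
\end{equation*}
where $d\sigma$ is the surface measure on $S^{2k-1}$.

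It is worth noting at the outset why a direct appeal to Theorem \ref{Improve} is not enough: a single continuous $B$ with $\log|z'|-B\le G_{\Omega,V}$ can capture, at $(0,w)$, only $\sup_{|X|=1}(-A_{\Omega,V,w}(X))=-\min_{|X|=1}A_{\Omega,V,w}$, and therefore yields the weaker constant $\sigma_k e^{-2k\min A_{\Omega,V,w}}\ge {\rm vol}(I_{\Omega,V,w})$, with equality exactly in the radial case. The passage from this supremum-over-directions bound to the true average $\frac{1}{2k}\int_{S^{2k-1}}e^{-2kA_{\Omega,V,w}(\omega)}\,d\sigma(\omega)$ is precisely the gain provided by the optimality (convexity) of the Berndtsson--Lempert construction, and cannot come from a pointwise weight estimate. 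Accordingly I would apply the optimal Berndtsson--Lempert type $L^2$-extension with defining function $G_{\Omega,V}$ and weight $\varphi$, for which the minimal norm of an extension of $f$ is controlled by the residue measure of $e^{-2kG_{\Omega,V}}$ along $V$; the remaining task is to compute the density of that measure.

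The heart of the argument is therefore the residue computation. Since $A_{\Omega,V,w}(z')=\log|z'|+A_{\Omega,V,w}(z'/|z'|)$ and $G_{\Omega,V}(z',w)-\log|z'|\to A_{\Omega,V,w}(z'/|z'|)$ as $z'\to 0$, one has $G_{\Omega,V}(z',w)=A_{\Omega,V,w}(z')+o(1)$; combined with $\{A_{\Omega,V,w}<t\}=e^{t}I_{\Omega,V,w}$ (again by homogeneity) this gives the normalized fiber-volume limit
\begin{equation*}
\lim_{t\to-\infty}e^{-2kt}\,{\rm vol}_{2k}\bigl(\{z'\in\mathbb{C}^k:G_{\Omega,V}(z',w)<t\}\bigr)={\rm vol}(I_{\Omega,V,w}).
\end{equation*}
Running the extension over the sublevel exhaustion $\Omega_t=\{G_{\Omega,V}<t\}$ and evaluating the leading-order contribution of $\int_\Omega|F|^2e^{-\varphi}$ on the shells $\{t-1<G_{\Omega,V}<t\}$, where $F\approx f(w)$, $\varphi\approx\varphi(0,w)$ and the shell volume is governed by the displayed limit, transfers this density into the constant and produces the estimate $\int_\Omega|F|^2e^{-\varphi}\le\int_V{\rm vol}(I_{\Omega,V,w})|f|^2e^{-\varphi}$ after letting $t\to-\infty$ and extracting a weak limit of the extensions.

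The step I expect to be the main obstacle is the uniform justification of the residue limit. Here $A_{\Omega,V,w}$ is only assumed to exist as a pointwise limit, the indicatrix $I_{\Omega,V,w}$ may be unbounded or have irregular boundary, and the convergence $G_{\Omega,V}(\cdot,w)\to A_{\Omega,V,w}$ must be made uniform in $w$ in order to interchange $\lim_{t\to-\infty}$ with the integration over $V$. I would control this using the monotonicity of the normalized volumes $e^{-2kt}{\rm vol}_{2k}(\{G_{\Omega,V}(\cdot,w)<t\})$ inherited from the plurisubharmonicity of $G_{\Omega,V}$ (Rashkovskii's theory of indicators), which supplies a dominating bound for a dominated-convergence argument, and by checking that the weak limit of the $F_t$ does not lose mass along $V$ so that it still restricts to $f$. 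The remaining estimates are the standard H\"ormander/Berndtsson--Lempert inequalities already packaged in the extension theorems of the paper.
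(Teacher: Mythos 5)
Your proposal is correct and follows essentially the same route as the paper: your ``residue limit'' $\lim_{t\to-\infty}e^{-2kt}\,{\rm vol}_{2k}\bigl(\{z':G_{\Omega,V}(z',w)<t\}\bigr)={\rm vol}(I_{\Omega,V,w})$, obtained from the scaling substitution and the assumed existence of $A_{\Omega,V,w}$, is exactly the content of the paper's Lemma \ref{L^2estimate2}, which is then substituted for Lemma \ref{L^2estimate} in the Berndtsson--Lempert monotonicity argument of Theorem \ref{theoremOT}. Your side remarks---that a single continuous $B$ can only capture $\sup_{|X|=1}(-A_{\Omega,V,w}(X))$ and hence cannot produce the averaged constant, and that the interchange of the $t\to-\infty$ limit with the integral over $V$ needs a domination argument---are both sound, the latter being a point the paper itself passes over silently.
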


	\noindent
	Using the above result, we study a sharper estimate in a specific example. 

	This article is organized as follows. 
	In Section 2, we define the pluricomplex Green function with poles along subvarieties and introduce some properties of it. 
	We also prove Theorem \ref{Improve} following the argument of the proof of Berndtsson--Lempert type $L^2$-extension theorem \cite{H3}. 
	In Section 3, we prove Theorem \ref{Higher} by using Theorem \ref{Improve}. 
	In Section 4, as a special case, we consider the sharper estimates in terms of the Azukawa pseudometric. 
	In particular, we prove Theorem \ref{Azukawa0} and study a specific example.
	In addition, as an application of Theorem \ref{Higher}, in the case where $\Omega = \mathbb{B}^n$ is the unit ball in $\mathbb{C}^n$, $V=\{z_1=\cdots=z_k=0\}$ is a closed submanifold and $\varphi$ is a radial plurisubharmonic function with respect to $V$, we prove that one can obtain the $L^2$-minimum extension of holomorphic functions $f$ on $V$.
	

\section{Establishment an analogue of  Berndtsson--Lempert type $L^2$-extension theorem}

Let $\Omega$ be a domain in $\mathbb{C}^n$ and $V$ a anlytic subvariety of $\Omega$. In other words, for any $z \in V$, there exist a neighborhood $U$ of $z$ and holomorphic functions $\psi_1, \ldots, \psi_k$ on $U$ such that $V \cap U = \{\psi_1 = \dots = \psi_k = 0 \}$. 
Let $\mathcal{O}_{\Omega}$ be the sheaf of germs of locally defined holomorphic functions on $\Omega$ and $\mathcal{I}_V$ be the coherent ideal sheaf of $V$ in $\mathcal{O}_{\Omega}$. 

\begin{Def}[\cite{Ras}]

Let $\Omega$ be a domain in $\mathbb{C}^n$ and $V$ a anlytic subvariety of $\Omega$. 
The class $\mathcal{F}_V$ consists of all negative plurisubharmonic functions $u$ on $\Omega$ such that for any $z \in \Omega$, there exist local generators $\psi_1, \ldots, \psi_k$ of $\mathcal{I}_V$ near $z$ and a constant $C$ depending on $u$ and generators such that $u \le \log|\psi| + C$ near $z$ where we denote $\psi = (\psi_1, \ldots, \psi_k)$. 

\end{Def}

\begin{Def}[\cite{Ras}]

{\it{The pluricomplex Green function $G_{\Omega,V}$ with poles along $V$}} is the upper envelope of all functions in $\mathcal{F}_V$, i.e., for any $z \in \Omega$, 
	\begin{equation*}
	G_{\Omega, V}(z) := \sup\{u(z) | u \in \mathcal{F}_V\}. 
	\end{equation*}
\end{Def}

When $\mathcal{F}_V = \varnothing$, we define $G_{\Omega,V} = -\infty$. 
The pluricomplex Green function $G_{\Omega,V}$ have the following important property. 

\begin{Thm}[\cite{Ras}]

If $V$ is closed, then $G_{\Omega, V} \in \mathcal{F}_V$. 
\end{Thm}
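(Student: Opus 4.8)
The plan is to verify directly that $u := G_{\Omega,V}$ satisfies the two defining conditions of the class $\mathcal{F}_V$: that it is a negative plurisubharmonic function, and that near every point it is dominated by $\log|\psi| + C$ for suitable local generators $\psi$ and a constant $C$. Following the standard treatment of extremal plurisubharmonic envelopes, I would pass to the upper semicontinuous regularization $G^*_{\Omega,V}$ and reduce the theorem to four assertions: (i) $\mathcal{F}_V \neq \varnothing$, so that $G_{\Omega,V} \not\equiv -\infty$; (ii) $G^*_{\Omega,V}$ is plurisubharmonic and $\le 0$; (iii) a \emph{uniform} local singularity bound $\sup_{u \in \mathcal{F}_V} u \le \log|\psi| + C$ near each point; and (iv) $G^*_{\Omega,V} = G_{\Omega,V}$.

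For (i), under the standing hypotheses nonemptiness is easy: when $V$ carries bounded global generators $\psi$ (as in Theorem~\ref{Improve}), the function $\log|\psi|$, normalized so that $|\psi| < 1$, is itself a negative plurisubharmonic member of $\mathcal{F}_V$; more generally, coherence of $\mathcal{I}_V$ on a pseudoconvex $\Omega$ produces finitely many global sections from which such a competitor is built. For (ii), since every $u \in \mathcal{F}_V$ is $\le 0$, the family is locally uniformly bounded above, so by the standard envelope theory (via Choquet's lemma, extracting a countable subfamily with the same regularized supremum) $G^*_{\Omega,V}$ is plurisubharmonic, and clearly $G^*_{\Omega,V} \le 0$. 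Assertion (iv) is then formal: once (iii) is known, $G^*_{\Omega,V}$ is a negative plurisubharmonic function satisfying the local domination (the bound survives regularization because $\log|\psi|$ is upper semicontinuous), hence $G^*_{\Omega,V} \in \mathcal{F}_V$; this gives $G^*_{\Omega,V} \le G_{\Omega,V}$, and combined with the always-valid inequality $G_{\Omega,V} \le G^*_{\Omega,V}$ we conclude $G_{\Omega,V} = G^*_{\Omega,V} \in \mathcal{F}_V$.

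The heart of the argument is (iii), the uniformity of the constant over the whole family. Fix a point $z_0$ and local generators $\psi$ on a ball $B(z_0, r)$. Away from $V$ the function $\log|\psi|$ is bounded below, so $u \le 0 \le \log|\psi| + C$ holds trivially with a uniform $C$; the only issue is near $V$. Taking $V$ smooth (the case relevant to this paper) and coordinates in which $V = \{z' = 0\}$ and $\psi = z'$, I would slice by the complex discs $\lambda \mapsto (\lambda\zeta, z'')$, $|\lambda| < r/2$, where $\zeta = z'/|z'|$, so that the center $\lambda = 0$ maps to the point $(0,z'') \in V$. On such a disc $\tilde u(\lambda) := u(\lambda\zeta, z'')$ is subharmonic, $\le 0$, with $\tilde u(\lambda) \le \log|\lambda| + O(1)$ at the center; by the extremal property of the one-dimensional Green function of the disc this forces $\tilde u(\lambda) \le \log\bigl(|\lambda|/(r/2)\bigr)$. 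Evaluating at $\lambda = |z'|$ yields $u(z) \le \log|z'| + \log(2/r) = \log|\psi(z)| + C$ with $C = \log(2/r)$ independent of $u$, which is exactly the desired uniform bound.

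The main obstacle is precisely this uniform singularity estimate, and more specifically its extension beyond the smooth model to an arbitrary analytic subvariety $V$, where one cannot linearize and the naive slicing no longer isolates a clean logarithmic pole. Here I would argue on the regular part $V \setminus \operatorname{Sing}(V)$ by the disc comparison above, and then remove the singular locus, which is a thin pluripolar analytic set, by showing the resulting bound persists across it, using that $G^*_{\Omega,V}$ is plurisubharmonic and that subharmonic functions bounded above extend across pluripolar sets. This singular-set removal, together with the normalization needed for global generators on a merely bounded $\Omega$, is the only genuinely delicate point; the plurisubharmonicity of the envelope and the equality $G_{\Omega,V} = G^*_{\Omega,V}$ are routine once (iii) is in hand.
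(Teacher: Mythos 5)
The paper offers no proof of this statement---it is quoted verbatim from \cite{Ras}---so the only meaningful comparison is with the standard argument in the literature. Your overall architecture (pass to the regularization $G^*_{\Omega,V}$, get plurisubharmonicity via Choquet's lemma, prove a \emph{uniform} local bound $u \le \log|\psi| + C$ over $u \in \mathcal{F}_V$, then conclude $G^*_{\Omega,V} = G_{\Omega,V} \in \mathcal{F}_V$) is the right one, and your step (iii) is correct where $V$ is smooth: restricting to the normal discs $\lambda \mapsto (\lambda\zeta, z'')$ and comparing with the Green function of the disc of radius $r/2$ gives $u(z',z'') \le \log|z'| + \log(2/r)$ with a constant independent of $u$, and since any system of local generators satisfies $\log|\psi| = \log|z'| + O(1)$ near a smooth point, this is exactly the required uniform estimate. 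Since the paper only ever invokes the theorem for closed complex \emph{submanifolds}, your argument covers every use actually made of it.

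The statement as written, however, concerns an arbitrary closed analytic subvariety, and your treatment of $\mathrm{Sing}(V)$ has a genuine gap. The disc comparison yields the constant $\log(2/r)$, where $r$ is the radius of a ball centered at a point of $V_{\mathrm{reg}}$ on which $V$ can be linearized inside $\Omega$; this $r$ shrinks to $0$, and hence the constant blows up, as the center approaches $\mathrm{Sing}(V)$. So what you actually obtain is a bound that degenerates on a full open neighborhood of $\mathrm{Sing}(V)$ in $\Omega$---not a bound valid off a pluripolar set---and there is nothing left for a removable-singularity argument to act on. Moreover, the function you would need to extend across $\mathrm{Sing}(V)$, namely $u - \log|\psi|$ (or $G^*_{\Omega,V} - \log|\psi|$), is a difference of plurisubharmonic functions and is not subharmonic, so the theorem on extending subharmonic functions bounded above across pluripolar sets does not apply to it. Even passing to a log resolution does not rescue the slicing argument: at a normal-crossing point the one-variable comparison recovers only a single factor $a_i\log|w_i| + C$ at a time and loses the remaining terms of $\sum_i a_i\log|w_i|$, so it does not reassemble the full bound. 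The singular case therefore requires a genuinely different local argument (this is where \cite{Ras} does real work), and your proposal does not supply it.
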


In other words, if $V$ is closed, $G_{\Omega,V}$ is plurisubharmonic without upper semi-continuous regularization. 

Here, we assume that $V$ has bounded global generators $\psi$. This means that there exist bounded holomorphic functions $\psi_1, \ldots, \psi_k$ on $\Omega$ such that
	\begin{equation*}
	V = \{\psi_1 = \cdots = \psi_k = 0\}. 
	\end{equation*}
Then, by boundedness of generators, we can take a positive constant $M$ such that $\frac{|\psi|}{M} < 1$, i.e., $\log\frac{|\psi|}{M} \in \mathcal{F}_V$. Since $\mathcal{F}_V \neq \varnothing$, there exists the pluricomplex Green function $G_{\Omega,V}$ with poles along $V$. 

And, since $V$ is submanifold, for any $z^{\circ} \in V$, we can take the coordinate $z=(z_1,\ldots,z_n)$ near $z^{\circ}$ such that
	\begin{equation*}
	J_{\psi} := \det \frac{\partial(\psi_1, \ldots, \psi_k)}{\partial(z_{1}, \ldots, z_k)} \neq 0. 
	\end{equation*}
In particular, in the case where $V$ has bounded global generator, for any $z \in \Omega$, we can find a re-ordering of linear coordinates $z':=(z_1,\ldots,z_{k})$ and $z'':=(z_{k+1},\ldots,z_{n})$ such that $J_{\psi} = \det \frac{\partial(\psi_1, \ldots, \psi_k)}{\partial(z_{1}, \ldots, z_k)} \neq 0$. 

In this setting, we establish an analogue of the Berndtsson--Lempert type $L^2$-extension theorem. 

\begin{Thm}\label{theoremOT}
Let $\Omega$ be a bounded pseudoconvex domain in $\mathbb{C}^n$, $V$ a closed complex submanifold of $\Omega$ with codimension $k$ such that $V$ has bounded global generators $\psi=(\psi_1,\ldots,\psi_k)$ and there exists a positive constant $C$ such that $\displaystyle \frac{1}{C} \le |J_{\psi}|$ near $V$. Let $\varphi$ be a plurisubharmonic function on $\Omega$ and $G_{\Omega,V}$ the pluricomplex Green function on $\Omega$ with poles along $V$. Assume that there exists some continuous function $B$ on $\Omega$ such that  
	\begin{equation}\label{estimatebelow}
	\log |\psi(z)| - B(z) \le G_{\Omega,V}(z). 
	\end{equation}
Then for any holomorphic function $f$ on $V$ with $\displaystyle \int_V |f|^2 e^{-\varphi + 2kB} < \infty$, there exists a holomorphic function $F$ on $\Omega$ such that $F|_V = f$ and
	\begin{equation*}
	\int_\Omega |F|^2 e^{-\varphi} \le C \sigma_k \int_V |f|^2 e^{-\varphi + 2kB}, 
	\end{equation*}
where $\sigma_k$ is the volume of the unit ball in $\mathbb{C}^k$. 
\end{Thm}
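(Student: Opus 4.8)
The plan is to follow the Berndtsson--Lempert method as in \cite{H3}, taking the pluricomplex Green function $G:=G_{\Omega,V}$ as the weight that carries the logarithmic singularity along $V$. By the theorem quoted above, $G\in\mathcal F_V$, so $G$ is negative plurisubharmonic and satisfies $G\le\log|\psi|+O(1)$ locally near $V$; combined with the hypothesis (\ref{estimatebelow}), $\log|\psi|-B\le G$, this yields the two-sided comparison $\log|\psi|-B\le G\le\log|\psi|+O(1)$ on a neighborhood of $V$. Consequently the sublevel sets $\Omega_t:=\{G<-t\}$ decrease to $V$ as $t\to\infty$ and, transversally, resemble the tubes $\{|\psi|<e^{-t}e^{B}\}$. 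This comparison is what will eventually generate the factors $e^{2kB}$ and $\sigma_k$: a transverse ball $\{|\psi|<e^{-t}e^{B}\}$ in $\mathbb C^k$ has volume $\sigma_k\,(e^{-t}e^{B})^{2k}=\sigma_k e^{2kB}e^{-2kt}$, and the factor $e^{-2kt}$ will be absorbed by the normalization of the weight.

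First I would perform the usual reductions. Exhaust $\Omega$ by relatively compact smooth strictly pseudoconvex subdomains, regularize $\varphi$ by a decreasing sequence of smooth strictly plurisubharmonic functions, and, using that $V$ is a submanifold with global generators $\psi$ and the Jacobian lower bound, reduce to the case in which $f$ is the restriction of a function holomorphic in a neighborhood of $\overline V$. The estimate for the original data will then follow by weak $L^2$-compactness and a limiting argument, the constant being unaffected because it is independent of the regularization.

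The analytic core is a twisted $\bar\partial$-estimate of Donnelly--Fefferman/Berndtsson type. For a convex increasing function $\chi_t$, to be chosen, I would use the weight $e^{-\varphi-\chi_t(G)}$; plurisubharmonicity of both $\varphi$ and $G$ supplies the curvature positivity needed for the a priori inequality, and the auxiliary function in the twist is tuned (through the standard ODE for $\chi_t$) so that the constant in the resulting estimate converges, as $t\to\infty$, to the value appearing in the conclusion. Concretely, take a smooth extension $\tilde F$ of $f$ cut off to a neighborhood of $V$, solve $\bar\partial u=\bar\partial\tilde F$ with this weighted estimate, and set $F:=\tilde F-u$. Since $e^{-\chi_t(G)}$ blows up along $V$, where $G=-\infty$, any $L^2$-solution $u$ must vanish on $V$, so $F|_V=f$; moreover $\bar\partial\tilde F$ is supported in the tube around $V$, so the right-hand side of the estimate is concentrated near $V$.

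The hard part, and the main obstacle, will be the higher-codimension \emph{residue computation}: one must show that as $t\to\infty$ the concentrated term $\int_\Omega|\bar\partial\tilde F|^2 e^{-\varphi-\chi_t(G)}$ converges to exactly $\sigma_k\int_V|f|^2 e^{-\varphi+2kB}$. For this I would pass to the coordinates $(z',z'')$ in which $J_\psi=\det(\partial\psi/\partial z')\ne0$, change variables $w=\psi(z)$ so that the transverse integration runs over balls in $\mathbb C^k$, and use the Jacobian bound $1/|J_\psi|\le C$ to compare the ambient measure on the tubes with the intrinsic measure on $V$; this is precisely where the constant $C$ enters. The transverse integral of the weight over balls of radius comparable to $e^{-t}e^{B}$ yields the volume factor $\sigma_k$ together with $e^{2kB}$, while the two-sided comparison forces the remaining contribution to match $|f|^2 e^{-\varphi}$ on $V$. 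Controlling the error terms coming from the twist and from the gap between $G$ and $\log|\psi|$, and showing that they vanish in the limit so that exactly the constant $C\sigma_k$ survives, is the technically demanding step.
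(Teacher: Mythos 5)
Your strategy is not the one the paper uses, and as written it has a genuine gap at exactly the point where all the difficulty of a sharp estimate lives. You propose the twisted $\bar{\partial}$ route of Ohsawa--Takegoshi/B\l ocki/Guan--Zhou: cut off a local extension near $V$, solve $\bar{\partial}u=\bar{\partial}\tilde F$ in the weight $e^{-\varphi-\chi_t(G)}$, and then claim that for a suitable $\chi_t$ the concentrated term converges to $\sigma_k\int_V|f|^2e^{-\varphi+2kB}$ and that ``exactly the constant $C\sigma_k$ survives''. But you never specify $\chi_t$, never write the a priori inequality whose constant must be tracked, and you explicitly defer the convergence of the error terms as ``the technically demanding step''. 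That step is not a technicality: in the sharp-constant literature it is the entire proof (the ODE analysis of \cite{Blo}, \cite{GZ}), and here it is harder still, because you only have the one-sided hypothesis \eqref{estimatebelow} together with a merely local, non-uniform bound $G\le\log|\psi|+O(1)$ from $G\in\mathcal{F}_V$. Your residue computation needs asymptotics on the shells $\left\{t/2-1<G<t/2\right\}$ where $\bar{\partial}$ of the cutoff is supported; the inclusions these two bounds give only trap the transverse shell volume between $\sigma_k e^{kt}\bigl(e^{2kB}-e^{-2k(1+O(1))}\bigr)$ and $\sigma_k e^{kt}e^{2kB}$, which is not the asymptotic identification your argument requires. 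So the claim that the precise constant emerges is unsubstantiated.

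The paper avoids all of this by not solving a $\bar{\partial}$-equation at all. Following \cite{H3} and \cite{BL}, it computes the norm of the minimal extension $F_0$ by duality against functionals $\xi_g$ supported on $V$, deforms the weight to $\varphi_{t,p}=\varphi+p\max\left\{G-\tfrac{t}{2},0\right\}$, and invokes Berndtsson's convexity theorem \cite{Ber} to conclude that $e^{-kt/2}\Vert F_{t,p}\Vert$ is monotone in $t$; letting $p\to\infty$ reduces everything to $e^{-kt}\int_{\{G<t/2\}}|F^{\circ}|^2e^{-\varphi}$, for which only a one-sided $\limsup$ bound is needed. That bound is Lemma \ref{L^2estimate}, and it is exactly what the inclusion $\{G<t/2\}\subset\{|\psi|<e^{t/2+B}\}$ from \eqref{estimatebelow}, the change of variables $w=\psi(z)$, and the Jacobian bound $1/|J_\psi|\le C$ deliver — this is the only place where your proposal and the paper genuinely overlap. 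If you want to salvage the $\bar{\partial}$ route you must carry out the Guan--Zhou-type analysis for this specific weight and show it reproduces $C\sigma_k e^{2kB}$; the dual/convexity argument gets the constant with far less work because the sublevel-set integral appears directly as an upper bound rather than through a delicately tuned twist.
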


\begin{Rem}
Theorem \ref{theoremOT} is obtained even if we use an another function in $\mathcal{F}_{V}$ satisfying \eqref{estimatebelow} in substitution for $G_{\Omega,V}$. 
In general, if we use an another function in $\mathcal{F}_{V}$ satisfying \eqref{estimatebelow}, the $L^2$-estimate obtained from the conclusion becomes rough.
\end{Rem}

\begin{proof}
To prove Theorem \ref{theoremOT},  we follow the argument in the proof of \cite[Theorem 2.1]{H3}. 

Let $A^2(\Omega,\varphi) = A^2(\Omega)$ be a Hilbert space of holomorphic functions $F$ on $\Omega$ with $\int_{\Omega} |F|^2 e^{-\varphi} < \infty$. We may assume that $\varphi$ is continuous. 

Let $F^{\circ} \in A^2(\Omega,\varphi)$ be an arbitrary $L^2$-extension of $f$ to $\Omega$. 
Here, we consider a sequences of domains $\Omega_j$ such that $\Omega_j$ approximates $\Omega$ from inside. 
From now on, we will discuss on $\Omega_j$. For simplicity, we omit the subscript $j$. 

Let $F_0$ be the $L^2$-minimum extension of $f$. The $L^2$-norm $||F_0||_{A^2(\Omega)}$ of $F_0$ is equal to the $L^2$-norm $\Vert F^{\circ} \Vert_{A^2(\Omega)/{A^2(\Omega) \cap \mathcal{I}_V}}$ of $F^{\circ}$, where we denote $\mathcal{I}_V := \{ F \in \mathcal{O}(\Omega) : F|_V = 0 \}$. Actually, 
	\begin{align*}
	\Vert F_0 \Vert_{A^2(\Omega)} &= \sup_{0 \neq e \in A^2(\Omega)} \frac{|(e,F_0)_{A^2(\Omega)}|}{\Vert e 		\Vert_{A^2(\Omega)}} 
	                            \\&= \sup_{0 \neq e \in \mathcal{I}_V^{\bot}} \frac{|(e,F_0)_{A^2(\Omega)}|}{\Vert e \Vert_{A^2(\Omega)}}
                                \\&= \sup_{0 \neq \xi \in A^2(\Omega)^* \cap \text{Ann} \mathcal{I}_V} \frac{|\left<\xi,F_0\right>|}	{\Vert \xi \Vert_{A^2(\Omega)^*}}
                                \\&= \sup_{0 \neq \xi \in A^2(\Omega)^* \cap \text{Ann} \mathcal{I}_V} \frac{|\left<\xi,F^{\circ}\right>|}{\Vert 	\xi \Vert_{A^2(\Omega)^*}}
                                \\&= ||F^{\circ}||_{A^2(\Omega)/{A^2(\Omega) \cap \mathcal{I}_V}},  
	\end{align*}
where we denote $\text{Ann} \mathcal{I}_V := \{ \xi \in A^2(\Omega)^* : \xi |_{A^2(\Omega) \cap \mathcal{I}_V} = 0 \}$.
Then we deal with the linear form $\xi$ in the following way. For a fixed smooth function $g$ on $V$ with compact support, we define a linear functional $\xi_g$ on $A^2(\Omega)$ by
	\begin{equation*}
	\left< \xi_g, h \right> := \sigma_k \int_V h \bar{g} e^{-\varphi+2kB}, \quad h \in A^2(\Omega, \varphi). 
	\end{equation*}
The set of such functionals $\xi_g$ is a dense subspace of $(A^2(\Omega)/{A^2(\Omega) \cap \mathcal{I}_V})^*$. Therefore, the $L^2$-norm $\Vert F^{\circ} \Vert_{A^2(\Omega)/{A^2(\Omega) \cap \mathcal{I}_V}}$ can be written as
	\begin{equation*}
	\sup_{g} \frac{|\left< \xi_g, F^{\circ} \right>|}{||\xi_g||_{A^2(\Omega)^*}}. 
	\end{equation*} 

For $p > 0$, $t \in \mathbb{C}$ with Re $t \le 0$, we define
	\begin{equation*}
	\varphi_{t,p}(z) := \varphi(z) + p \max \left\{ G(z)-\frac{\text{Re} t}{2}, 0 \right\}. 
	\end{equation*}
For any fixed $p$, by the convexity theorem in {\cite[Theorem 1.1]{Ber}}, 
	\begin{equation*}
	t \longmapsto \log||\xi_g||_{A^2(\Omega, \varphi_{t,p})^*}
	\end{equation*}
is subharmonic. In particular, $\varphi_{t,p}$ is convex in Re $t$. Therefore, we can assume that $t \in \mathbb{R}_{\le 0}$. 
The following lemma describes the asymptotic behavior of $||\xi_g||_{A^2(\Omega, \varphi_{t,p})^*}$ when $t \to -\infty$. 
We will write $||\xi_g||_{A^2(\Omega, \varphi_{t,p})^*} = ||\xi_g||_{t,p}$ for simplicity. 

\begin{Lem}[{\cite[Lemma 3.2]{BL}}]\label{asymptotic behavior}

For fixed $p>0$, it follows that 
	\begin{equation*}
	||\xi_g||_{t,p} e^{\frac{kt}{2}} = O(1). 
	\end{equation*}
when $t \to -\infty$. In particular, $||\xi_g||_{t,p} e^{\frac{kt}{2}}$ is increasing in $t$. 
\end{Lem}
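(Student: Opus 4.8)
The plan is to establish the two assertions in turn: the asymptotic bound $\|\xi_g\|_{t,p}\,e^{kt/2}=O(1)$ first, and then to deduce the monotonicity from it together with convexity. Throughout I restrict to $t\in\mathbb{R}_{\le 0}$ and write $\|h\|_{t,p}^2=\int_\Omega |h|^2 e^{-\varphi_{t,p}}$, so that $\|\xi_g\|_{t,p}=\sup_{h\neq 0}|\langle\xi_g,h\rangle|/\|h\|_{t,p}$. Bounding $\|\xi_g\|_{t,p}$ from above is therefore equivalent to bounding $\|h\|_{t,p}^2$ from below by a multiple of the trace integral $\int_V|h|^2 e^{-\varphi+2kB}$, after which Cauchy--Schwarz applied to $\langle\xi_g,h\rangle=\sigma_k\int_V h\bar g\,e^{-\varphi+2kB}$ finishes the estimate.

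The key step is this lower bound, and it is where the geometry of the sublevel tubes enters. Since $\varphi_{t,p}=\varphi$ on $U_t:=\{G\le t/2\}$, I have $\|h\|_{t,p}^2\ge\int_{U_t}|h|^2 e^{-\varphi}$. Because $G_{\Omega,V}\in\mathcal{F}_V$ by the quoted theorem of Rashkovskii, near $V$ there is a local bound $G\le\log|\psi|+A$ with $A$ locally bounded; hence the tube $\{|\psi|\le e^{t/2-A}\}$ lies in $U_t$ and has transverse size comparable to $e^{t/2}$. Localizing near $V$ and using the coordinates in which $\psi=(\psi_1,\dots,\psi_k)$ are transverse, I would change variables by $z'\mapsto\psi$ (its Jacobian $J_\psi$ being continuous, hence bounded above and below on the relevant compact transverse slices) and apply the sub-mean value inequality for the plurisubharmonic function $|h|^2$ over transverse balls of radius $\asymp e^{t/2}$. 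Such a ball in $\mathbb{C}^k$ has volume $\sigma_k(e^{t/2})^{2k}=\sigma_k e^{kt}$, so together with the continuity of $\varphi$ and the boundedness of $B$ this yields $\|h\|_{t,p}^2\ge c\,e^{kt}\int_V|h|^2 e^{-\varphi+2kB}$ for some $c>0$ independent of $t$. Combined with Cauchy--Schwarz this gives $\|\xi_g\|_{t,p}e^{kt/2}\le\sigma_k c^{-1/2}\left(\int_V|g|^2 e^{-\varphi+2kB}\right)^{1/2}$, which is the desired $O(1)$.

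For the monotonicity I would exploit that $\varphi_{t,p}$ depends on $t$ only through $\operatorname{Re}t$, via $\max\{G-\operatorname{Re}t/2,0\}$; hence $t\mapsto\log\|\xi_g\|_{t,p}$ is a function of $\operatorname{Re}t$ alone, and being subharmonic on the $t$-plane by Berndtsson's convexity theorem \cite{Ber}, it is convex in $s:=\operatorname{Re}t$. Adding the linear term $ks/2$ preserves convexity, so $s\mapsto\log\!\left(\|\xi_g\|_{t,p}e^{ks/2}\right)$ is convex on $(-\infty,0]$. The $O(1)$ bound says it is bounded above as $s\to-\infty$, and a convex function on a left half-line that is bounded above near $-\infty$ must be non-decreasing---otherwise a negative slope at some point would, by convexity, force the function to $+\infty$ at $-\infty$. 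This yields that $\|\xi_g\|_{t,p}e^{kt/2}$ is increasing in $t$.

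The main obstacle is the local lower bound of the second paragraph: one must match the power $e^{kt}$ precisely (equivalently, certify that the tube $\{G\le t/2\}$ has transverse diameter exactly of order $e^{t/2}$), and control the weight $e^{-\varphi}$ and the Jacobian $J_\psi$ uniformly in $t$ while passing from the fiberwise sub-mean value estimate to the trace on $V$. Everything else---the Cauchy--Schwarz step and the convexity argument---is routine once this uniform estimate is in hand.
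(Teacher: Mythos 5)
The paper does not actually prove this lemma --- it is quoted verbatim from \cite[Lemma 3.2]{BL} --- so there is no in-paper argument to compare against; your proof is correct and is essentially the standard Berndtsson--Lempert argument adapted to codimension $k$: the sub-mean value inequality on transverse balls of radius $\asymp e^{t/2}$ inside $\{G\le t/2\}$ (where $\varphi_{t,p}=\varphi$) gives $\int_{\mathrm{supp}\,g}|h|^2e^{-\varphi+2kB}\le C e^{-kt}\|h\|_{t,p}^2$, Cauchy--Schwarz gives the $O(1)$ bound, and convexity of $s\mapsto\log\|\xi_g\|_{t,p}+ks/2$ plus boundedness at $-\infty$ forces monotonicity. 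The only point to make explicit is that all uniform bounds (on $A$, $B$, $J_\psi$, $\varphi$) are needed only on the compact support of $g$, which is exactly where the trace integral lives, so the constant $c$ is indeed independent of $t$.
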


Let $F_{t,p}$ be the $L^2$-minimum extension of $f$ in $A^2(\Omega, \varphi_{t,p})$. By Lemma \ref{asymptotic behavior}, 
	\begin{equation*}
	e^{-\frac{kt}{2}} ||F_{t,p}||_{A^2(\Omega, \varphi_{t,p})} = \sup_g \frac{\left< \xi_g, F^{\circ} \right>}{e^{\frac{kt}{2}}||\xi_g||_{t,p}}
	\end{equation*}
is decreasing in $t$. Therefore, it follows that
	\begin{align*}
	||F_0||_{A^2(\Omega, \varphi)} & \le e^{-\frac{kt}{2}} ||F_{t,p}||_{A^2(\Omega, \varphi_{t,p})}
	                             \\& \le e^{-\frac{kt}{2}} ||F^{\circ}||_{A^2(\Omega, \varphi_{t,p})}.
	\end{align*}
For fixed $t<0$, the right hand side of the last inequalities converges to 
	\begin{equation*}
	e^{-\frac{kt}{2}} \left( \int_{\{G < \frac{t}{2}\}} |F^{\circ}|^2 e^{-\varphi} \right)^{\frac{1}{2}}
	\end{equation*}
when $p \to \infty$. 

The subscript $j$ will be specified from here. We prepare the following lemma. 

\begin{Lem}\label{L^2estimate}
Let $\chi \ge 0$ be a continuous function on $\bar{\Omega}$ and integrable on $V$. Then, it follows that
	\begin{equation*}
	\limsup_{t \to -\infty} e^{-kt}\int_{\Omega_j \cap \Omega_t}\chi \le C \sigma_k \int_{\Omega_j \cap V} \chi e^{2kB}, 
	\end{equation*}
where we denote $\displaystyle \Omega_t = \left\{G_{\Omega,V} < \frac{t}{2} \right\}$.  
\end{Lem}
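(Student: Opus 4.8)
The plan is to reduce the assertion to the computation of the volume of a shrinking tubular neighbourhood of $V$ and then to pass to the limit $t\to-\infty$. First I would use the lower bound \eqref{estimatebelow}: on the sublevel set $\Omega_t=\{G_{\Omega,V}<t/2\}$ one has $\log|\psi(z)|-B(z)\le G_{\Omega,V}(z)<t/2$, hence $\Omega_t\subseteq T_t:=\{z\in\Omega:|\psi(z)|<e^{t/2+B(z)}\}$. Since $\chi\ge 0$, this gives
\[
e^{-kt}\int_{\Omega_j\cap\Omega_t}\chi\ \le\ e^{-kt}\int_{\Omega_j\cap T_t}\chi ,
\]
so it suffices to show that the right-hand side has the desired limit as $t\to-\infty$; the required $\limsup$ is then dominated by that limit, which is why only an upper bound (the inclusion $\Omega_t\subseteq T_t$) is needed.

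Next I would localize near $V$, since $T_t$ collapses onto $V$ as $t\to-\infty$ and only a neighbourhood of $V$ contributes. Around each point of $V$, choose coordinates $(z',z'')$ with $J_\psi=\det\partial(\psi_1,\dots,\psi_k)/\partial(z_1,\dots,z_k)\neq 0$ and use the map $\Phi(z)=(\psi(z),z'')$, which is a local biholomorphism carrying $V$ to $\{w=0\}$ and has complex Jacobian determinant $J_\psi$, so that $dV_z=|J_\psi|^{-2}\,dV_{(w,z'')}$. In these coordinates, for fixed $z''$ the slice of $T_t$ is the ball $\{|w|<e^{t/2+B}\}$ in the transverse variable $w\in\mathbb{C}^k$, of volume $\sigma_k e^{kt}e^{2kB}$. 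Substituting $w=e^{t/2}v$ and patching the charts by a partition of unity, I would rewrite $e^{-kt}\int_{\Omega_j\cap T_t}\chi$ as an integral over $z''$ of a transverse integral over $\{|v|<e^{B}\}$ of $\chi\,|J_\psi|^{-2}$, both evaluated at $\Phi^{-1}(e^{t/2}v,z'')$.

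Then I would let $t\to-\infty$. On the shrinking tube the continuous functions $\chi$, $B$ and $|J_\psi|$ converge to their restrictions to $V$, and continuity on $\bar\Omega$ together with integrability of $\chi$ on $V$ justifies dominated convergence. The transverse integral therefore tends to $\sigma_k e^{2kB}$ times the values of $\chi\,|J_\psi|^{-2}$ on $V$, and reassembling the charts turns the outer integral into an integral over $\Omega_j\cap V$ against the induced volume. The surviving Jacobian factor is controlled from above by the hypothesis $1/C\le|J_\psi|$ near $V$, which yields the constant appearing in the statement, giving
\[
\limsup_{t\to-\infty}e^{-kt}\int_{\Omega_j\cap\Omega_t}\chi\ \le\ C\sigma_k\int_{\Omega_j\cap V}\chi\,e^{2kB}.
\]

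The main obstacle is this limit passage: one must control that the radius $e^{t/2+B(z)}$ of the transverse ball genuinely depends on $z$ (through the transverse variable as well), and verify that replacing $B$, $\chi$ and $J_\psi$ by their values on $V$ produces an error that disappears after the normalization by $e^{-kt}$; equally delicate is the Jacobian bookkeeping that identifies the surviving factor with the correct induced volume element on $\Omega_j\cap V$ and extracts the constant from $1/C\le|J_\psi|$.
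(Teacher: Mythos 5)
Your proposal is correct and follows essentially the same route as the paper: bound $\Omega_t$ by the tube $\{|\psi|<e^{t/2+B}\}$ via the lower bound on $G_{\Omega,V}$ (only the inclusion is needed since $\chi\ge 0$), change variables to $(\psi,z'')$ on a finite cover of $\overline{\Omega_j\cap V}$ where $J_\psi\neq 0$, compute the transverse ball volume $\sigma_k e^{kt}e^{2kB}$, and pass to the limit using continuity of $\chi$ and $B$ and the lower bound on $|J_\psi|$. The only cosmetic difference is that you write the real Jacobian factor as $|J_\psi|^{-2}$ (which, taken literally with the hypothesis $1/C\le|J_\psi|$, would yield the constant $C^2$) whereas the paper uses $|J_\psi|^{-1}$ and obtains $C$; this bookkeeping detail does not affect the structure of the argument.
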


\begin{proof}
By using the assumption on $G_{\Omega,V}$, we can denote 
	\begin{equation*}
	\Omega_t = \{|\psi| < e^{\frac{t}{2} + B}\}.
	\end{equation*}
Since the submanifold $\overline{\Omega_j \cap V}$ is compact in $\mathbb{C}^n$, there exists a finite open covering $\{ U_i \}_{i=1}^N$ such that there exists a change of numbering of linear coordinates depending on $i$ so that we have 
	\begin{equation*}
	J_{\psi} = \det \frac{\partial(\psi_1, \ldots, \psi_k)}{\partial(z_{1}, \ldots, z_k)} \neq 0 \quad \text{on each $U_i$.}
	\end{equation*}	 
Let $t<0$ be a sufficiently negative number. 
Since for any point in $V$, there exists $U_i$ such that the given point in $U_i$, we have
	\begin{align}
	e^{-kt}\int_{\Omega_j \cap \Omega_t}\chi &= e^{-kt}\int_{\Omega_j \cap {\{|\psi| < e^{\frac{t}{2} + B}\}}} \chi \notag
	                                       \\&= e^{-kt}\int_{\Omega_j \cap V} dz'' \int_{\{|\psi| < e^{\frac{t}{2} + B}\}} \chi(\psi^{-1},z'') \frac{1}{|J_{\psi}|} d\psi \notag
	                                       \\&\le C e^{-kt}\int_{\Omega_j \cap V} dz'' \int_{\{|\psi| < e^{\frac{t}{2} + B}\}} \chi(\psi^{-1},z'') d\psi. \label{eqL^2}
	\end{align}
By the continuity of $\chi$, we can calculate the right-hand side of \eqref{eqL^2} as follow:
	\begin{align*}
	C e^{-kt}\int_{\Omega_j \cap V} dz'' \int_{\{|\psi| < e^{\frac{t}{2} + B}\}} \chi(\psi^{-1},z'') d\psi &\le C e^{-kt}\int_{\Omega_j \cap V} (\chi(0,z'') + \epsilon ) dz'' \int_{\{|\psi| < e^{\frac{t}{2} + B}\}} d\psi
	                                                                                                     \\&= C e^{-kt}\int_{\Omega_j \cap V} (\chi(0,z'') + \epsilon ) \sigma_k e^{2k(\frac{t}{2} + B)} dz''
	                                                                                                     \\&= C \sigma_k \int_{\Omega_j \cap V} (\chi(0,z'') + \epsilon ) e^{2kB} dz''. 
	\end{align*}
	
Take the upper limits with respect to $t$ of both sides of the above inequalities and $t \to -\infty$, we get the conclusion. 
\end{proof}

We conclude from Lemma \ref{L^2estimate} that 
	\begin{align*}
	||F_0^{(j)}||_{A^2(\Omega_j, \varphi)}^2 &\le \limsup_{t \to -\infty} e^{-kt}||F^{\circ}||_{A^2(\Omega_j \cap \{G < \frac{t}{2}\}, \varphi)}^2
	                                     \\&\le C \sigma_k \int_V |f|^2 e^{-\varphi + 2kB}.
	\end{align*}
Therefore, the $L^2$-norms $||F_0^{(j)}||_{A^2(\Omega_j, \varphi)}$ are uniformly bounded with respect to $j$. 
After taking a subsequence of $\{F_0^{(j)}\}$, it converges in $A^2(\Omega_{j_0},\varphi)$ for fixed $j_0$, thus the limit $F_0^{(\infty)} \in A^2(\Omega, \varphi)$ of it satisfies 
	\begin{equation*}
	||F_0^{(\infty)}||_{A^2(\Omega, \varphi)} \le C \sigma_k \int_V |f|^2 e^{-\varphi + 2kB}.
	\end{equation*}

\end{proof}

	
\section{Higher dimensional case of sharper estimates}

In this section, we prove the higher dimensional case of Hosono's sharper estimate by using Theorem \ref{theoremOT}.
Our setting is as follows. 

Let $\Omega$ be a bounded pseudoconvex domain in $\mathbb{C}^n$ and $V$ a closed complex submanifold of $\Omega$ with codimension $k$ such that $V$ has bounded global generators $\psi=(\psi_1,\ldots,\psi_k)$. 
Suppose that there exists a positive constant $C$ such that $\displaystyle \frac{1}{C} \le |J_{\psi}|$ near $V$. Let $\varphi$ be a plurisubharmonic function on $\Omega$ and $G_{\Omega,V}$ the pluricomplex Green function on $\Omega$ with poles along $V$. Assume that there exists some continuous function $B$ on $\Omega$ such that  
	\begin{equation*}
	\log |\psi(z)| - B(z) \le G_{\Omega,V}(z). 
	\end{equation*}
Let $\tilde{\Omega}$ be a pseudoconvex domain in $\mathbb{C}^{n+k}$ defined by 
	\begin{equation*}
	\tilde{\Omega} = \{(z,w) \in \mathbb{C}^{n+k} : z \in \Omega, |w|^2 < e^{-\frac{\varphi(z)}{k}} \} 
	\end{equation*}
and $\tilde{V}$ a closed complex submanifold of $\tilde{\Omega}$ such that
	\begin{equation*}
	\tilde{V} = \{\tilde{\psi}_1 = \cdots = \tilde{\psi}_k = 0 \}, 
	\end{equation*}
where $\tilde{\psi}_i(z,w) := \psi_i(z) $ are holomorphic functions on $\tilde{\Omega}$. 
Let $\tilde{G}$ be a pluricomplex Green function on $\tilde{\Omega}$ with poles along $\tilde{V}$ or an another function in $\mathcal{F}_V$ such that there exists continuous function $\tilde{B}(z,w)$ on $\tilde{\Omega}$ such that 
	\begin{equation*}
	\log|\tilde{\psi}(z,w)| - \tilde{B}(z,w) \le \tilde{G}(z,w).
	\end{equation*}
Then the following theorem holds. 

\begin{Thm}\label{theoremH}
Under the above setting, the following statements hold.   

$(1)$ For any holomorphic function $f$ on $V$ with $\displaystyle \int_V |f|^2 e^{-\varphi+2kB} < \infty$, there exists a holomorphic function $F$ on $\Omega$ such that $F|_V = f$ and
	\begin{equation*}
	\int_{\Omega} |F|^2 e^{-\varphi} \le C \int_{\tilde{V}} |\tilde{f}|^2 e^{2k\tilde{B}},  
	\end{equation*}
where C is a positive constant determined from Theorem \ref{theoremOT} and the holomorphic function $\tilde{f}$ on $\tilde{V}$ is defined by $\tilde{f}(z,w) := f(z)$.

$(2)$ Suppose that $\tilde{\Omega}$ is a strictly pseudoconvex domain and $-B(z)$ is a plurisubharmonic function. Then one can make the estimate in $(1)$ strictly sharper than one in Theorem \ref{theoremOT}, i.e., there exist functions $\tilde{G}$ and $\tilde{B}$ satisfying the above conditions such that the estimate
	\begin{equation*}
	\int_{\tilde{V}} |\tilde{f}|^2 e^{2k\tilde{B}} < \sigma_k \int_V |f|^2 e^{-\varphi + 2kB} 
	\end{equation*}
holds.
\end{Thm}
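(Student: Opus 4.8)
The proof of (1) proceeds by applying Theorem~\ref{theoremOT} to the pair $(\tilde\Omega,\tilde V)$ with the trivial weight $\tilde\varphi\equiv 0$ and then descending the extension to $\Omega$. First I would verify the hypotheses for $(\tilde\Omega,\tilde V)$: it is a bounded pseudoconvex domain, $\tilde V$ is a closed submanifold of codimension $k$ with the bounded global generators $\tilde\psi=(\tilde\psi_1,\dots,\tilde\psi_k)$, and since $\tilde\psi_i(z,w)=\psi_i(z)$ depend only on $z$, the Jacobian $J_{\tilde\psi}$ computed in the coordinates $(z_1,\dots,z_k)$ equals $J_\psi$, so $\tfrac1C\le|J_{\tilde\psi}|$ near $\tilde V$; the lower bound $\log|\tilde\psi|-\tilde B\le\tilde G$ is assumed. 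Theorem~\ref{theoremOT} then yields the $L^2$-minimum extension $\tilde F_0\in A^2(\tilde\Omega)$ of $\tilde f$ with $\int_{\tilde\Omega}|\tilde F_0|^2\le C\sigma_k\int_{\tilde V}|\tilde f|^2e^{2k\tilde B}$ (when the right-hand side is infinite there is nothing to prove).

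To descend to $\Omega$, I would use that $\tilde\Omega$, $\tilde V$ and $\tilde f$ are invariant under the rotations $R_\theta\colon(z,w)\mapsto(z,e^{i\theta}w)$. Each $R_\theta$ acts unitarily on $A^2(\tilde\Omega)$ and preserves both the set of extensions of $\tilde f$ and the ideal $\{H\in\mathcal O(\tilde\Omega):H|_{\tilde V}=0\}$, so by uniqueness of the minimal extension $R_\theta\tilde F_0=\tilde F_0$ for every $\theta$. Expanding $\tilde F_0$ as a power series in $w$, invariance forces every term of positive $w$-degree to vanish; hence $\tilde F_0(z,w)=F(z)$ is independent of $w$, $F$ is holomorphic on $\Omega$, and $F|_V=f$ because $\tilde f(z,w)=f(z)$. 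Since the fibre over $z$ is a ball of volume $\sigma_k e^{-\varphi(z)}$, Fubini gives $\int_{\tilde\Omega}|\tilde F_0|^2=\sigma_k\int_\Omega|F|^2e^{-\varphi}$, and dividing the displayed estimate by $\sigma_k$ proves (1).

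For (2) the key observation is that the constant $\sigma_k\int_V|f|^2e^{-\varphi+2kB}$ of Theorem~\ref{theoremOT} is precisely the value the right-hand side of (1) takes for the trivial pull-back choice $\tilde G(z,w)=G_{\Omega,V}(z)$, $\tilde B(z,w)=B(z)$: the pull-back of $G_{\Omega,V}$ lies in $\mathcal F_{\tilde V}$, and fibre integration of $e^{2kB(z)}$ over the ball of volume $\sigma_k e^{-\varphi(z)}$ reproduces that constant exactly. To beat it I would enlarge $\tilde G$ in the fibre directions. Writing $s(z,w):=\varphi(z)+k\log|w|^2$, which is plurisubharmonic and negative on $\tilde\Omega$, I would seek $\tilde G$ of the form $\log|\psi(z)|-B(z)+\theta(s(z,w))$ together with $\tilde B(z,w)=B(z)-\theta(s(z,w))$, where $\theta\ge0$ is convex increasing and positive on a set of positive measure. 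Plurisubharmonicity of $\tilde G$ is then guaranteed by the hypothesis that $-B$ is plurisubharmonic together with convexity of $\theta$, while $\log|\tilde\psi|-\tilde B=\tilde G$ holds by construction. Granting $\tilde G\in\mathcal F_{\tilde V}$, fibre integration yields, for each $z\in V$, the strict bound $\int_{\{|w|^2<e^{-\varphi(z)/k}\}}e^{2k\tilde B}\,dV(w)=e^{2kB(z)}\int e^{-2k\theta(s)}\,dV(w)<\sigma_k e^{-\varphi(z)+2kB(z)}$, because $e^{-2k\theta(s)}\le1$ with strict inequality on a positive-measure set; integrating over $V$ gives the claimed strict estimate.

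The main obstacle is showing that such a $\tilde G$ genuinely belongs to $\mathcal F_{\tilde V}$, i.e.\ that the positive bump can be inserted while keeping $\tilde G$ negative throughout $\tilde\Omega$. By the maximum principle this reduces to the boundary condition $\limsup_{(z,w)\to\partial\tilde\Omega}\tilde G\le0$; on the fibre part of the boundary ($s\to0$) this holds once $\theta(0)=0$, but on the part lying over $\partial\Omega$ the term $\log|\psi|-B$ tends to $0$ while $\theta(s)$ need not, so a naive fibre bump fails there. This is exactly where strict pseudoconvexity of $\tilde\Omega$ enters: it lets one replace the explicit ansatz by a solution of the associated Dirichlet problem for the complex Monge--Amp\`ere equation, in the spirit of Hosono's construction, producing a maximal $\tilde G\in\mathcal F_{\tilde V}$ with zero boundary values that strictly dominates the pull-back $G_{\Omega,V}(z)$ on a set of positive fibre-measure, hence a strictly smaller $\tilde B$. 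Converting this strict pointwise gain into the strict inequality of the fibre integrals, uniformly enough to survive the passage to the boundary $\tilde V$, is the technical heart of (2).
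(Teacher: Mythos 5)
Your part (1) is correct and reaches the same estimate as the paper, but by a different descent mechanism. The paper simply sets $F(z):=\tilde F(z,0)$ and applies the sub-mean value inequality $|\tilde F(z,0)|^2\le\frac{1}{\sigma_k e^{-\varphi(z)}}\int_{\{|w|^2<e^{-\varphi(z)/k}\}}|\tilde F|^2$ on each fibre, which needs no uniqueness or symmetry considerations and works for any extension produced by Theorem \ref{theoremOT}. You instead invoke rotation invariance of the minimal extension to force $w$-independence and then use Fubini; this is valid (the set of $L^2$-extensions of $\tilde f$ is a nonempty closed affine subspace, so the minimizer exists, is unique, and is fixed by the unitary action $w\mapsto e^{i\theta}w$) and buys the exact identity $\int_{\tilde\Omega}|\tilde F_0|^2=\sigma_k\int_\Omega|F|^2e^{-\varphi}$ in place of an inequality, at the cost of a slightly heavier setup.

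Part (2) has a genuine gap: you never actually produce the pair $(\tilde G,\tilde B)$. You yourself observe that the explicit ansatz $\tilde G=\log|\psi|-B+\theta(s)$ violates negativity on the part of $\partial\tilde\Omega$ lying over $\partial\Omega$, and your fallback --- ``replace the ansatz by a solution of the associated Dirichlet problem \dots\ the technical heart of (2)'' --- is precisely the content that must be supplied. The paper's construction is concrete: solve the Bedford--Taylor Dirichlet problem for $\tilde u\in PSH(\tilde\Omega)\cap\mathcal C(\overline{\tilde\Omega})$ with $(dd^c\tilde u)^{n+k}=0$ and boundary values $\tilde u=-\max(\log|\tilde\psi|,D)$, where $D$ is so negative that $\max(\log|\psi|,D)-B<0$ on $\Omega$; then set $\tilde G=\log|\tilde\psi|+\tilde u$ and $\tilde B=-\tilde u$. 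Negativity of $\tilde G$ follows from the maximum principle, and $\tilde G\in\mathcal F_{\tilde V}$ is automatic because $\tilde u$ is bounded. The two steps you leave open are closed as follows. First, the hypothesis that $-B$ is plurisubharmonic is used exactly here: $\tilde u'(z,w):=-B(z)$ satisfies $(dd^c\tilde u')^{n+k}\ge 0=(dd^c\tilde u)^{n+k}$ and $\tilde u'\le\tilde u$ on $\partial\tilde\Omega$, so the comparison principle (Bedford--Taylor, Theorem A) gives $\tilde u\ge -B$, i.e.\ $\tilde B\le B$, on all of $\tilde\Omega$; your proposal never establishes this global one-sided bound, without which the fibre integrals cannot be compared at all. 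Second, strictness with positive measure on each fibre comes from the boundary data: on $\tilde V\cap\partial\tilde\Omega$ one has $\log|\tilde\psi|=-\infty$, hence $\tilde u=-D>-B$, and continuity of $\tilde u$ near $\partial\tilde\Omega$ gives, for every $z\in V$, a positive-measure set of $w$ in the fibre where $\tilde u(z,w)>-B(z)$. Your closing worry about uniformity is a red herring: once the inner integral inequality $\int_{\{|w|^2<e^{-\varphi(z)/k}\}}e^{2k\tilde B}<\sigma_k e^{-\varphi(z)+2kB(z)}$ holds for every $z\in V$, integrating against $|f|^2$ over $V$ already yields the strict inequality (for $f\not\equiv 0$), with no uniformity required.
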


\begin{proof}
$(1)$ By applying Theorem \ref{theoremOT} with the trivial metric $e^{-\tilde{\varphi}} \equiv 1$ to the holomorphic function $\tilde{f}(z,w) := f(z)$ on $\tilde{V}$, we get a holomorphic function $\tilde{F}$ on $\tilde{\Omega}$  satisfying the properties that 
$\tilde{F}|_{\tilde{V}} = \tilde{f}$ and
	\begin{equation*}
	\int_{\tilde{\Omega}}|\tilde{F}|^2 \le C \sigma_k \int_{\tilde{V}} |\tilde{f}|^2 e^{2k\tilde{B}} = C \sigma_k \int_{\tilde{V}} |f|^2 e^{2k\tilde{B}}. 
	\end{equation*}
We consider a holomorphic function $F(z) := \tilde{F}(z,0)$ on $\Omega$. For any $z \in V$, we have
	\begin{equation*}
	F(z) = \tilde{F}(z,0) = \tilde{f}(z,0) = f(z), 
	\end{equation*}
i.e., $F|_V = f$ holds. For fixed $z \in \Omega$, by the mean value inequality, we have
	\begin{equation*}
	|F(z)| = |\tilde{F}(z,0)| \le \frac{1}{\sigma_k e^{-\varphi(z)}} \int_{|w|^2 < e^{-\frac{\varphi(z)}{k}}} |\tilde{F}|^2.
	\end{equation*}
Therefore, it follows that
	\begin{equation*}
	\sigma_k \int_{\Omega} |F|^2 e^{-\varphi} \le \int_{\tilde{\Omega}} |\tilde{F}|^2.
	\end{equation*}
Then the following inequality holds. 
	\begin{equation*}
	\int_{\Omega} |F|^2 e^{-\varphi} \le C \int_{\tilde{V}} |f|^2 e^{2k\tilde{B}}. 
	\end{equation*}

$(2)$ We assume that $\tilde{\Omega}$ is a strictly pseudoconvex domain. Then by \cite[Theorem D]{BT}, there exists $\tilde{u} \in PSH(\tilde{\Omega}) \cap \mathcal{C}(\bar{\tilde{\Omega}})$ such that
	\begin{equation*}
	(dd^c \tilde{u})^{n+k} = 0 \quad \text{on $\tilde{\Omega}$, and} 
	\end{equation*}
	\begin{equation*}
	\tilde{u} = -\max(\log|\tilde{\psi}|, D) \quad \text{on $\partial \tilde{\Omega}$},  
	\end{equation*}
where $D$ is a sufficiently negative constant such that
	\begin{equation*}
	\max(\log|\psi|, D) - B(z) < 0
	\end{equation*}
on $\Omega$. 
Here, we define $\tilde{G} := \log|\tilde{\psi}| + \tilde{u}$. 
On $\partial \tilde{\Omega}$, we have
	\begin{equation*}
	\tilde{G} = \log|\tilde{\psi}| - \max(\log|\tilde{\psi}|, D) \le \log|\tilde{\psi}| - \log|\tilde{\psi}| = 0.
	\end{equation*}
By the maximum principle, it follows that $\tilde{G} < 0$ on $\tilde{\Omega}$. By continuity of $\tilde{u}$, we can choose $\tilde{B} = -\tilde{u}$. 

Then it is sufficient to prove the following inequality.
	\begin{equation*}
	\int_{\tilde{V}} |f|^2 e^{2k\tilde{B}} < \sigma_k \int_V |f|^2 e^{-\varphi + 2kB}. 
	\end{equation*}
First, we calculate each side of the above inequality separately:  
	\begin{equation*}
	\int_{\tilde{V}} |f|^2 e^{2k\tilde{B}} = \int_V |f(z)|^2 \left( \int_{|w|^2 < e^{-\frac{\varphi}{k}}} e^{2k\tilde{B}(z,w)} \right), 
	\end{equation*}
	\begin{align*}
	\sigma_k \int_V |f|^2 e^{-\varphi + 2kB} &= \int_V |f(z)|^2 e^{-\varphi + 2kB(z)} \left( \int_{|w|^2 < e^{-\frac{\varphi}{k}}} e^{\varphi(z)} \right)
	                                       \\&= \int_V |f(z)|^2 \left( \int_{|w|^2 < e^{-\frac{\varphi}{k}}} e^{2kB(z)} \right).
	\end{align*}
Therefore, we need to compare the value of $\displaystyle \int_{|w|^2 < e^{-\frac{\varphi}{k}}} e^{2k\tilde{B}(z,w)}$ and $\displaystyle \int_{|w|^2 < e^{-\frac{\varphi}{k}}} e^{2kB(z)}$. 

We define a plurisubharmonic function $\tilde{u}'(z,w) := -B(z)$ on $\tilde{\Omega}$. Then the function $\tilde{u}'$ satisfies the following conditions. 
	\begin{equation*}
	(dd^c \tilde{u}')^{n+k} \ge 0 \quad \text{on $\tilde{\Omega}$}, 
	\end{equation*}
	\begin{equation*}
	\tilde{u}' = -B \quad \text{on $\partial \tilde{\Omega}$}. 
	\end{equation*} 

Since $\tilde{u}=-\max(\log|\tilde{\psi}|, D) \ge \tilde{u}'=-B$ on $\partial \tilde{\Omega}$ and $(dd^c \tilde{u})^{n+k} \le (dd^c \tilde{u}')^{n+k}$ on $\tilde{\Omega}$, \cite[Theorem A]{BT} yields $\tilde{u} \ge \tilde{u}'$ on $\tilde{\Omega}$. In particular, for any $z \in V$, it follows that
	\begin{equation*}
	\tilde{u}(z,w) \ge \tilde{u}'(z,w) = -B(z)
	\end{equation*}
on $\{|w|^2 < e^{-\frac{\varphi(z)}{k}}\}$.
Since $\tilde{u}=-D>-B$ on $\tilde{V} \cap \partial \tilde{\Omega}$ and $\tilde{u}$ is continuous near $\partial \tilde{\Omega}$, for any $z \in V$, $\{ w:|w|<e^{-\frac{\varphi(z)}{k}},\,\tilde{u}(z,w)> -B(z) \}$ has positive measure. Therefore, we have
	\begin{equation*}
	\int_{|w|^2 < e^{-\frac{\varphi}{k}}} e^{2k\tilde{B}(z,w)} < \int_{|w|^2 < e^{-\frac{\varphi}{k}}} e^{2kB(z)}.
	\end{equation*}
\end{proof}

\begin{Rem}
When $B(z)$ is a continuous function on $\Omega$ with $\log |\psi(z)| - B(z) \le G_{\Omega,V}(z)$ on $\Omega$, we can obtain a continuous function $B'(z)$ on $\Omega$ such that $-B'(z)$ is plurisubharmonic and $\log |\psi(z)| - B'(z) \le G_{\Omega,V}(z)$ on $\Omega$. In fact, by \cite[Theorem D]{BT}, there exists $-B' \in PSH(\tilde{\Omega}) \cap \mathcal{C}(\bar{\tilde{\Omega}})$ such that 
	\begin{equation*}
	(dd^c (-B'))^{n+k} = 0 \quad \text{on $\tilde{\Omega}$ and}
	\end{equation*}
	\begin{equation*}
	{-B'} = -B \quad \text{on $\partial \tilde{\Omega}$}. 
	\end{equation*}
Then, since 
	\begin{equation*}
	\log|\tilde{\psi}| - B' = \log|\tilde{\psi}| - B(z) \le G_{\Omega,V}(z) \le 0
	\end{equation*}
on $\partial \tilde{\Omega}$, from the maximum principle, it follows that $\log|\tilde{\psi}| - B' \le 0$ on $\tilde{\Omega}$. 
In particular, we have $\log|\psi| - B'(z,0) \le G_{\Omega,V}(z)$ on $\Omega$. 

But, we do not know whether we can obtain the sharper estimates after replacing the function $B$ with $B'$. 

\end{Rem}


\section{Special cases}

\subsection{Toward sharper estimates of the Ohsawa--Takegoshi $L^2$-extension theorem in terms of the Azukawa pseudometric}

The $L^2$-estimate obtained from the conclusion will be sharper if $B(z)$ is smaller, i.e., $G_{\Omega,V}(z)$ is bigger. 
Therefore, we need to take the pluricomplex Green function to obtain a better $L^2$-estimate. 
As a special case, the following result was obtained in \cite{H2}.  

\begin{Thm}[\cite{H2}]\label{Azukawa}

Let $\Omega$ be a bounded pseudoconvex domain in $\mathbb{C}^n$, $w$ a point in $\Omega$ and $\varphi$ a plurisubharmonic function on $\Omega$.  
Let $g_{\Omega, w}$ be the pluricomplex Green function on $\Omega$ with a pole at $w$ and $A_{\Omega, w}$ the Azukawa pseudometric.
We assume that there exists the limit
	\begin{equation*}
	A_{\Omega,w}(X) = \lim_{\lambda \to 0} \left( g_{\Omega,w}(w + \lambda X) - \log|\lambda| \right). 
	\end{equation*}
Then there exist a holomorphic function $f$ on $\Omega$ such that $f(w) = 1$ and
	\begin{equation*}
	\int_{\Omega} |f|^2 e^{-\varphi} \le {\rm{vol}}(I_{\Omega,w})e^{-\varphi(w)}, 
	\end{equation*}	 
where $I_{\Omega,w}$ is the Azukawa indicatrix defined by $I_{\Omega,w}:= \{ X \in \mathbb{C}^n : A_{\Omega,w}(X) < 0 \}$ and ${\rm{vol}}(I_{\Omega,w})$ is the euclidean volume of $I_{\Omega,w}$. 

\end{Thm}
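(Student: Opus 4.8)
The plan is to run the Berndtsson--Lempert argument exactly as in the proof of Theorem \ref{theoremOT}, specialized to the zero-dimensional submanifold $V=\{w\}$ (so that $k=n$), and to replace the crude volume estimate of Lemma \ref{L^2estimate} by a sharp one dictated by the Azukawa indicatrix. Concretely, I would take the linear generators $\psi_i(z)=z_i-w_i$, for which $J_\psi\equiv 1$ (hence $C=1$), so that $G_{\Omega,\{w\}}=g_{\Omega,w}$ and $\log|\psi(z)|=\log|z-w|$; comparison with an inscribed ball shows the hypothesis of Theorem \ref{theoremOT} holds with a constant $B$, so its proof applies verbatim. Following that proof, I would fix an arbitrary extension $F^{\circ}$ of the value $f(w)=1$, set $\varphi_{t,p}(z)=\varphi(z)+p\max\{g_{\Omega,w}(z)-\tfrac t2,0\}$, and use Lemma \ref{asymptotic behavior} together with the monotonicity of $e^{-kt/2}\|F_{t,p}\|$ to obtain, after letting $p\to\infty$ and taking a limsup in $t$,
\[
\|F_0\|_{A^2(\Omega,\varphi)}^2\;\le\;\limsup_{t\to-\infty}e^{-nt}\int_{\{g_{\Omega,w}<t/2\}}|F^{\circ}|^2e^{-\varphi},
\]
where $F_0$ is the $L^2$-minimum extension and, as in the cited argument, one may assume $\varphi$ continuous.

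The decisive step is to evaluate this $\limsup$ sharply. I would rescale by $z=w+e^{t/2}X$ and set $u_t(X):=g_{\Omega,w}(w+e^{t/2}X)-\tfrac t2$, so that $\{g_{\Omega,w}<t/2\}=w+e^{t/2}\{u_t<0\}$ and therefore $e^{-nt}\operatorname{vol}\{g_{\Omega,w}<t/2\}=\operatorname{vol}\{u_t<0\}$. By the very definition of the Azukawa pseudometric (whose existence is the standing hypothesis), $u_t(X)\to A_{\Omega,w}(X)$ pointwise as $t\to-\infty$. Comparing $g_{\Omega,w}$ with the Green functions of an inscribed ball $B(w,r)\subset\Omega$ and a circumscribed ball $\Omega\subset B(w,R)$ gives $\log\frac{|X|}{r}\ge A_{\Omega,w}(X)\ge\log\frac{|X|}{R}$, whence $B(0,r)\subset I_{\Omega,w}\subset B(0,R)$ and the $u_t$ are uniformly bounded on the relevant region. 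Since $A_{\Omega,w}$ is plurisubharmonic and log-homogeneous, $A_{\Omega,w}(\lambda X)=A_{\Omega,w}(X)+\log|\lambda|$ is strictly increasing along each ray, so $\{A_{\Omega,w}=0\}=\partial I_{\Omega,w}$ is a null set; dominated convergence for the indicator functions then yields
\[
\lim_{t\to-\infty}e^{-nt}\operatorname{vol}\{g_{\Omega,w}<t/2\}=\operatorname{vol}(I_{\Omega,w}).
\]

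To finish, I would invoke continuity: as $\{g_{\Omega,w}<t/2\}$ shrinks to $w$ and $|F^{\circ}|^2e^{-\varphi}$ is continuous there with value $|F^{\circ}(w)|^2e^{-\varphi(w)}=e^{-\varphi(w)}$, the weighted integral localizes, giving $e^{-nt}\int_{\{g<t/2\}}|F^{\circ}|^2e^{-\varphi}\to e^{-\varphi(w)}\operatorname{vol}(I_{\Omega,w})$. Hence $\|F_0\|_{A^2(\Omega,\varphi)}^2\le\operatorname{vol}(I_{\Omega,w})e^{-\varphi(w)}$, and $f:=F_0$ is the required extension; the general plurisubharmonic $\varphi$ is then recovered by approximation from decreasing continuous plurisubharmonic weights, as in the proof of Theorem \ref{theoremOT}.

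I expect the main obstacle to be the sharp volume asymptotic, that is, upgrading the merely pointwise convergence $u_t\to A_{\Omega,w}$ to convergence of the sublevel-set volumes. The two delicate points are that one must use the uniform two-sided ball bounds (and the standard $L^1_{\mathrm{loc}}$-compactness of the plurisubharmonic family $u_t$) to control $\operatorname{vol}\{u_t<0\}$, and that one must verify $\{A_{\Omega,w}=0\}$ is Lebesgue-null, which rests on the log-homogeneity and plurisubharmonicity of $A_{\Omega,w}$. The entire Hilbert-space duality bookkeeping and the monotonicity in $t$ are inherited directly from the proof of Theorem \ref{theoremOT}, so the only genuinely new ingredient is this replacement of Lemma \ref{L^2estimate} by the exact Azukawa volume computation.
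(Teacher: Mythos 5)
Your proposal is correct and follows essentially the same route the paper takes for its own generalization (Theorem \ref{Azukawa2}): run the Berndtsson--Lempert monotonicity argument of Theorem \ref{theoremOT} with $V=\{w\}$, $k=n$, and replace the crude Lemma \ref{L^2estimate} by the sharp volume asymptotic obtained from the rescaling $z=w+e^{t/2}X$ and the defining limit of the Azukawa pseudometric. The only differences are minor: the paper needs and proves only the one-sided bound $\limsup_{t\to-\infty}e^{-nt}\operatorname{vol}\{g_{\Omega,w}<t/2\}\le\operatorname{vol}(I_{\Omega,w})$, obtained by comparing the sublevel set at level $t/2-\delta$ with $\{A_{\Omega,w}\le-\delta\}$ and letting $\delta\to 0$, which sidesteps your dominated-convergence step and the need to check that $\{A_{\Omega,w}=0\}$ is Lebesgue-null; and the hypothesis $\log|z-w|-B\le g_{\Omega,w}$ of Theorem \ref{theoremOT} comes from a circumscribed ball (boundedness of $\Omega$), not an inscribed one.
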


\begin{Rem}
In \cite{Z}, it is shown that the assumptions of Theorem \ref{Azukawa} holds on a bounded hyperconvex domain. 
\end{Rem}

When the submanifold $V$ is $\{ z_1 = \cdots = z_k=0 \}$, we can generalize Theorem \ref{Azukawa} by using the pluricomplex Green function with poles along $V$.  

\begin{Thm}\label{Azukawa2}
Let $\Omega$ be a bounded pseudoconvex domain in $\mathbb{C}^n$, $V$ a closed complex submanifold defined by $V = \{ z_1 = \cdots = z_k = 0 \}$ and $\varphi$ a plurisubharmonic function on $\Omega$. 
Let $G_{\Omega,V}$ be the pluricomplex Green function on $\Omega$ with poles along $V$. 
We assume that there exists the limit
	\begin{equation*}
	A_{\Omega, V, w}(X) := \lim_{\lambda \to 0} \left( G_{\Omega,V}(\lambda X, w) - \log|\lambda| \right), 
	\end{equation*}
where $(0,\ldots,0,w) \in V$ and $0 \neq X \in \mathbb{C}^k$. 
We define $I_{\Omega, V, w} := \{X \in \mathbb{C}^n : A_{\Omega, V, w}(X) < 0 \}$. 
Then for any holomorphic function $f$ on $V$ with $\int_V {\rm{vol}}(I_{\Omega, V, w}) |f|^2 e^{-\varphi} < \infty$, 
there exists a holomorphic function $F$ on $\Omega$ such that $F|_V = f$ and
	\begin{equation*}
	\int_{\Omega} |F|^2 e^{-\varphi} \le \int_V {\rm{vol}}(I_{\Omega, V, w}) |f|^2 e^{-\varphi}. 
	\end{equation*}	 
\end{Thm}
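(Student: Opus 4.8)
The plan is to specialize the machinery already built for Theorem \ref{theoremOT} to the generators $\psi = (z_1,\ldots,z_k) = z'$, for which $J_\psi \equiv 1$ and hence the constant $C$ may be taken to be $1$, and then to sharpen the concluding volume estimate (Lemma \ref{L^2estimate}) by replacing the crude sublevel-set volume with the asymptotic density furnished by the Azukawa indicatrix. Concretely, I would first run the Berndtsson--Lempert argument verbatim as in the proof of Theorem \ref{theoremOT}: reduce $\|F_0\|_{A^2(\Omega,\varphi)}$ to a supremum over the dual functionals $\xi_g$, invoke the convexity theorem together with Lemma \ref{asymptotic behavior} to see that $e^{-kt/2}\|F_{t,p}\|$ is decreasing in $t$, and let $p \to \infty$ to arrive at
\begin{equation*}
\|F_0\|_{A^2(\Omega,\varphi)}^2 \le \limsup_{t \to -\infty} e^{-kt}\int_{\Omega_t} |F^\circ|^2 e^{-\varphi},
\end{equation*}
valid for any fixed extension $F^\circ$ of $f$, where $\Omega_t = \{G_{\Omega,V} < t/2\}$.

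The heart of the argument is then a sharpened version of Lemma \ref{L^2estimate}: for any continuous $\chi \ge 0$ on $\bar{\Omega}$,
\begin{equation*}
\limsup_{t \to -\infty} e^{-kt}\int_{\Omega_t}\chi \le \int_V {\rm{vol}}(I_{\Omega,V,w})\, \chi(0,w)\, dV(w).
\end{equation*}
To prove this I would slice over $w \in V$ using the splitting $\mathbb{C}^n = \mathbb{C}^k_{z'} \times \mathbb{C}^{n-k}_{z''}$ (so $w = z''$) and Fubini, reducing matters to the one-point density statement
\begin{equation*}
\lim_{t \to -\infty} e^{-kt}\, {\rm{vol}}\bigl(\{z' : G_{\Omega,V}(z',w) < t/2\}\bigr) = {\rm{vol}}(I_{\Omega,V,w}),
\end{equation*}
the volume being the $2k$-dimensional Euclidean one in $\mathbb{C}^k$. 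This in turn follows from the logarithmic homogeneity $A_{\Omega,V,w}(\mu X) = \log|\mu| + A_{\Omega,V,w}(X)$ of the indicatrix function together with the hypothesis that the defining limit exists: the sublevel set $\{z' : G_{\Omega,V}(z',w) < t/2\}$ is asymptotically the dilate $e^{t/2} I_{\Omega,V,w}$, whose volume is exactly $e^{kt}\,{\rm{vol}}(I_{\Omega,V,w})$. Taking $\chi = |F^\circ|^2 e^{-\varphi}$, so that $\chi(0,w) = |f|^2 e^{-\varphi}$ on $V$, and combining with the displayed inequality yields the claimed bound; the exhaustion $\Omega_j \uparrow \Omega$ with a subsequence-extraction argument, exactly as at the end of the proof of Theorem \ref{theoremOT}, then produces the global extension $F$ on $\Omega$.

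I expect the main obstacle to be the justification of the volume asymptotics and, above all, the interchange of the limit $t \to -\infty$ with the integration over $V$. For the interchange I would use reverse Fatou: since $\log(|z'|/M) \in \mathcal{F}_V$ for $M = \sup_\Omega |z'|$, one has $G_{\Omega,V}(z',w) \ge \log|z'| - \log M$, so that $\{z' : G_{\Omega,V}(z',w) < t/2\} \subseteq \{|z'| < M e^{t/2}\}$ and therefore $e^{-kt}\,{\rm{vol}} \le \sigma_k M^{2k}$ uniformly in $t$ and $w$; this uniform domination, with the boundedness and continuity of $\chi$, licenses passing the $\limsup$ inside the $w$-integral. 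The remaining delicate point is pinning the sublevel sets to the dilated indicatrix near $V$: the rescaled functions $G_{\Omega,V}(\lambda X,w) - \log|\lambda|$ converge to $A_{\Omega,V,w}(X)$ by hypothesis, and by log-homogeneity this fixes the asymptotic shape of $\{G_{\Omega,V}(\cdot,w) < t/2\}$ as a dilate of $I_{\Omega,V,w}$; making this convergence precise in a manner uniform enough to be compatible with the $\le$ in the target inequality is the technical core, and is where I would spend most of the work.
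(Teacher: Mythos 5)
Your proposal follows the paper's own proof essentially verbatim: the paper likewise reruns the argument of Theorem \ref{theoremOT} and replaces Lemma \ref{L^2estimate} by exactly your sharpened volume lemma, proved via the rescaling $z' = e^{t/2}\tilde z'$ and the convergence of $G_{\Omega,V}(e^{t/2}\tilde z', z'') - \log e^{t/2}$ to $A_{\Omega,V,z''}$, with a $\delta$-shift of the sublevel sets to pass to the indicatrix. Your additional care about interchanging the limit with the integral over $V$ (reverse Fatou with the uniform bound $e^{-kt}\,\mathrm{vol}\le \sigma_k M^{2k}$ coming from $G_{\Omega,V}\ge \log(|z'|/M)$) supplies a justification the paper leaves implicit, but the route is the same.
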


\begin{proof}
It is sufficient to prove the following lemma. 

\begin{Lem}\label{L^2estimate2}
Let $\chi \ge 0$ be a continuous function on $\bar{\Omega}$. Then we have  
	\begin{equation*}
	\limsup_{t \to -\infty} e^{-kt}\int_{\Omega_t}\chi \le \int_{V} {\rm{vol}}(I_{\Omega,V,z''}) \chi ,  
	\end{equation*}
where $\displaystyle \Omega_t := \left\{G_{\Omega,V} < \frac{t}{2}\right\}$ and $(0,\ldots,0,z'') \in V$. 
\end{Lem}

\begin{proof}

For any $\delta>0$ and sufficiently negative $t$, by continuity of $\chi$, we have
	\begin{align*}
	e^{-kt}\int_{\Omega_{t-2\delta}}\chi &\le e^{-kt} \int_{V} (\chi(0, z'') + \epsilon ) dz'' \int_{\{G_{\Omega,V}(z', z'') < \frac{t}{2} -\delta \}} dz'. 
	\end{align*}
Here, for any $(0,\ldots,0,z'') \in V$, we consider the value of $\displaystyle e^{-kt} \int_{\{G_{\Omega,V}(z', z'') < \frac{t}{2} -\delta \}} dz'$. 
Using the substitution $\displaystyle z' = e^{\frac{t}{2}} \tilde{z}'$ , we have
	\begin{equation}\label{A}
	e^{-kt} \int_{\{G_{\Omega,V}(z', z'') < \frac{t}{2} -\delta \}} dz' = \int_{\{G_{\Omega,V}(e^{\frac{t}{2}}\tilde{z}', z'') - \log e^{\frac{t}{2}} < -\delta \}} d\tilde{z}' .
	\end{equation}
By the assumptions of $G_{\Omega, V}$, take the upper limits with respect to $t$ of both sides of \eqref{A}, then the right-hand side of \eqref{A} converges to something whose magnitude is at most
	\begin{equation*}
	\int_{\{A_{\Omega,V,z''}(\tilde{z}') \le -\delta \}}d\tilde{z}' .
	\end{equation*}
This value can be estimated as follow: 
	\begin{align*}
	\int_{\{A_{\Omega,V,z''}(\tilde{z}') \le -\delta \}}d\tilde{z}' &\le \int_{\{A_{\Omega,V,z''}(\tilde{z}') < 0 \}}d\tilde{z}'
	                                                              \\&= {\rm{vol}}(I_{\Omega,V,z''}). 
	\end{align*}
Therefore, by $\delta \to 0$, we can get the following inequality
	\begin{equation*}
	\limsup_{t \to -\infty} e^{-kt}\int_{\Omega_t}\chi \le \int_{V} {\rm{vol}}(I_{\Omega,V,z''}) \chi. 
	\end{equation*}
\end{proof}

By replacing Lemma \ref{L^2estimate} in the proof of Theorem \ref{theoremOT} with Lemma \ref{L^2estimate2}, we can prove Theorem \ref{Azukawa2}.
\end{proof}

\begin{Ex}
Here, for $n \ge 2$, we consider a unit ball $\Omega = \mathbb{B}^n$ in $\mathbb{C}^n$  and $\varphi(z) = -n\log(1-|z|^2)$. 
In this situation, the pluricomplex Green function $g_{\mathbb{B}^n,0}(z)$ with a pole at $0$ is equal to $\log|z|$ and the Azukawa pseudometric $A_{\mathbb{B}^n,0}(X)$ is equal to $\log|X|$. 
Since the Azukawa indicatrix is  $I_{\mathbb{B}^n,0}=\{|X| < 1\}$, therefore we have ${\rm{vol}}(I_{\mathbb{B}^n,0}) = \sigma_n$. 
On the other hand, since $\tilde{\Omega} = \{|w|^2 + |z|^2 < 1\}$ in $\mathbb{C}^{2n}$ and $\tilde{V} = \{|w| < 1\}$, the pluricomplex Green function $G_{\tilde{\Omega}, \tilde{V}}$ with poles along $\tilde{V}$ is equal to $\displaystyle \log \frac{|z|}{\sqrt{1-|w|^2}}$ and $\displaystyle A_{\tilde{\Omega},\tilde{V},w}(X)$ is equal to $\log|X| - \frac{1}{2}\log(1-|w|^2)$. 
Since $\displaystyle I_{\tilde{\Omega}, \tilde{V}, w} = \{ |X| < (1-|w|^2)^{\frac{1}{2}} \}$, we have ${\rm{vol}}(I_{\tilde{\Omega}, \tilde{V}, w}) = \sigma_n (1-|w|^2)^n$. 
Then
	\begin{align*}
	\int_{\tilde{V}} {\rm{vol}}(I_{\tilde{\Omega}, \tilde{V}, w}) &= \int_{|w|<1} \sigma_n (1-|w|^2)^n
	                                                   \\&= \sigma_n \int_{S^{2n-1}} \int_0^1 (1-r^2)^n r^{2n-1} dr dS
	                                                   \\&= \sigma_n \mu_n \int_0^1 (1-r^2)^n r^{2n-1} dr
	                                                   \\&= \frac{\sigma_n \mu_n}{2} B(n,n+1)
	                                                   \\&= \frac{\sigma_n \mu_n}{2} \frac{\Gamma(n) \Gamma(n+1)}{\Gamma(2n+1)}
	                                                   \\&= \frac{\sigma_n \mu_n}{2} \frac{(n-1)! n!}{(2n)!}, 
	\end{align*}
where $\mu_n$ is the volume of $S^{2n-1}$, $B$ is the Beta function and $\Gamma$ is the Gamma function. 
For any $n \ge 2$, since $\displaystyle \mu_n = \frac{2\pi^n}{\Gamma(n)} = \frac{2\pi^n}{(n-1)!}$, it follows that 
	\begin{equation*}
	\displaystyle \frac{\mu_n}{2} \frac{(n-1)! n!}{(2n)!} < 1. 
	\end{equation*}
Therefore, in this situation, it follows that $\int_{\tilde{V}} {\rm{vol}}(I_{\tilde{\Omega}, \tilde{V}, w}) < {\rm{vol}}(I_{\mathbb{B}^n,0})$. 
From this observation, we can expect that the sharper estimates of Ohsawa--Takegoshi $L^2$-extension theorem in terms of the Azukawa pseudometric holds. 
\end{Ex}

\subsection{Radial case in $\mathbb{C}^n$}

In \cite{H1}, Hosono obtained the $L^2$-minimum extension of the function $1$ on the subvariety $\{0\}$ by applying Hosono's sharper estimate to the case where $\Omega = \{|z|<1\}$ is a unit disc in $\mathbb{C}$ and $\varphi$ is a radial subharmonic function, i.e., $\varphi(z) = \varphi(|z|)$.
Similarly, in this subsection, we obtain the $L^2$-minimum extension of holomorphic functions $f$ on $V$ in the setting where $\Omega = \mathbb{B}^n$ is a unit ball in $\mathbb{C}^n$, $V$ is a closed submanifold defined by $V = \{z_1 = \cdots = z_k = 0\} = \{z'=0\}$ and $\varphi$ is a radial plurisubharmonic function with respect to $V$, i.e., $\varphi(z) = \varphi(|z'|)$, by applying Theorem \ref{theoremH} in this setting. 

For $z \in \Omega$, we denote $z=(z',z'')$ where $z'=(z_1,\ldots,z_k), z''=(z_{k+1},\ldots,z_n)$. 
We may assume that $\varphi(0) = 0$. 
Under the above setting, we can write $\varphi(z) = k u(\log|z'|^2)$ where $u$ is a convex increasing function on $\mathbb{R}_{<0}$. 
Assume that $u$ is strictly increasing and for fixed $z''$, $\displaystyle \lim_{t \to \log(1-|z''|^2)-0} u(t) = \infty$. 
Define the plurisubharmonic function $\psi$ by
	\begin{equation*}
	\psi(w) := -\frac{1}{2}u^{-1}(-\log|w|^2).  
	\end{equation*}
Then

\begin{Prop}\label{eq}
For fixed $z''$, we have
\begin{equation}\label{4-1}
\int_{|z'|^2+|z''|^2<1} e^{-\varphi} = \int_{|w|<1} e^{-2k\psi(w)}. 
\end{equation}
\end{Prop}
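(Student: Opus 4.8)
The plan is to reduce both sides to one–dimensional integrals using the radial symmetry and then to match them through the substitution $m=u(s)$, the crucial point being that the resulting discrepancy is a boundary term that vanishes precisely because of the two limiting conditions imposed on $u$.

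First I would rewrite the left–hand side. Fix $z''$ and set $a:=\log(1-|z''|^2)$. Since $e^{-\varphi}=e^{-ku(\log|z'|^2)}$ depends only on $\rho:=|z'|$, passing to polar coordinates in $z'\in\mathbb{C}^k$ and integrating over the spheres (whose surface measure is $2k\sigma_k\rho^{2k-1}\,d\rho$), followed by the substitution $s=\log\rho^2$, gives
\[
\int_{|z'|^2+|z''|^2<1}e^{-\varphi}=k\sigma_k\int_{-\infty}^{a}e^{k(s-u(s))}\,ds.
\]
Symmetrically, on the right–hand side I would pass to polar coordinates in $w\in\mathbb{C}^k$, use $-2k\psi(w)=k\,u^{-1}(-\log|w|^2)$, and substitute $m=-\log|w|^2$ to obtain
\[
\int_{|w|<1}e^{-2k\psi(w)}=k\sigma_k\int_{0}^{\infty}e^{k(u^{-1}(m)-m)}\,dm.
\]

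It then remains to prove $\int_{-\infty}^{a}e^{k(s-u(s))}\,ds=\int_{0}^{\infty}e^{k(u^{-1}(m)-m)}\,dm$. Here I would use that $u$ is strictly increasing and continuous with $u(-\infty)=0$ (from $\varphi(0)=0$) and $u(a^-)=\infty$ (the standing assumption for the fixed $z''$), so that $u$ is a bijection of $(-\infty,a)$ onto $(0,\infty)$ and the substitution $m=u(s)$, $dm=u'(s)\,ds$, turns the right–hand integral into $\int_{-\infty}^{a}e^{k(s-u(s))}u'(s)\,ds$. Subtracting,
\[
\int_{-\infty}^{a}e^{k(s-u(s))}\,ds-\int_{0}^{\infty}e^{k(u^{-1}(m)-m)}\,dm=\int_{-\infty}^{a}e^{k(s-u(s))}\bigl(1-u'(s)\bigr)\,ds=\frac{1}{k}\Bigl[e^{k(s-u(s))}\Bigr]_{s=-\infty}^{s=a},
\]
since $\frac{d}{ds}e^{k(s-u(s))}=k\,e^{k(s-u(s))}(1-u'(s))$. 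Both endpoints contribute $0$: as $s\to-\infty$ one has $s-u(s)\to-\infty$ because $u(-\infty)=0$, and as $s\to a^-$ one has $s-u(s)\to-\infty$ because $u(a^-)=\infty$. Hence the bracket is $0-0=0$ and the two integrals coincide, which proves \eqref{4-1}.

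I expect the only real subtlety to be the justification of this last step for a merely convex (hence only almost everywhere differentiable) $u$. This I would handle by recalling that a convex $u$ is locally Lipschitz, hence absolutely continuous, on $(-\infty,a)$; consequently $s\mapsto e^{k(s-u(s))}$ is absolutely continuous, the change–of–variables formula for $m=u(s)$ is valid, and the fundamental theorem of calculus applies with the improper endpoints $s=-\infty$ and $s=a$ interpreted as limits.
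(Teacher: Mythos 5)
Your proposal is correct and follows essentially the same route as the paper: both sides are reduced to one-dimensional integrals by polar coordinates and a logarithmic substitution, the right-hand side is pulled back via $m=u(s)$, and the difference $\int_{-\infty}^{a}e^{k(s-u(s))}(1-u'(s))\,ds$ is recognized as a total derivative vanishing at both endpoints (the paper phrases this as the substitution $q=ks-ku(s)$ producing an integral from $-\infty$ to $-\infty$). Your closing remark on absolute continuity of convex functions is a justification the paper leaves implicit, but it does not change the argument.
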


\begin{proof}
Let $\mu_{k}$ be the volume of $S^{2k-1}$. 
First, we calculate the left hand side of \eqref{4-1}. 
Using the substitution $2\log r = t$, we have
	\begin{align}
	\int_{|z'|^2+|z''|^2<1} e^{-\varphi} &= \int_{|z'|^2<1-|z''|^2} e^{-k u(\log|z|^2)} \notag
    	                               \\&= \mu_k \int_0^1 e^{-k u(\log r^2)} r^{2k-1} dr \notag
    	                               \\&= \frac{\mu_k}{2} \int_{-\infty}^{\log(1-|z''|^2)} e^{-k u(t)}e^{kt}dt. \label{4-2}
	\end{align}
Next, we calculate the right-hand side of \eqref{4-1}. 
At first, using the substitution $2k\log r = t$, we have
	\begin{align}
	\int_{|w|<1} e^{-2k\psi} &= \int_{|w|<1} e^{k u^{-1}(-\log|w|^2)} \notag
    	                   \\&= \mu_k \int_0^1 e^{k u^{-1}(-\log r^2)} r^{2k-1} dr \notag
	                       \\&= \frac{\mu_k}{2k} \int_{-\infty}^0 e^{k u^{-1}(-\frac{t}{k})}e^{t} dt. \label{4-3}
	\end{align}
Then letting $u^{-1}(-\frac{t}{k}) = s$, we see that the right hand side of \eqref{4-3} is equal to
	\begin{equation}\label{4-4}
	\frac{\mu_k}{2} \int_{-\infty}^{\log(1-|z''|^2)} e^{ks}e^{-k u(s)} u'(s) ds. 
	\end{equation}
And finally, using the substitution $ks-ku(s)=q$, we calculate the difference between \eqref{4-2} and \eqref{4-4} as follows:
	\begin{align*}
	&\frac{\mu_k}{2} \int_{-\infty}^{\log(1-|z''|^2)} e^{-ku(t)}e^{kt}dt - \frac{\mu_k}{2} \int_{-\infty}^{\log(1-|z''|^2)} e^{ks}e^{-ku(s)} u'(s) ds 
	\\&= \frac{\mu_k}{2k} \int_{-\infty}^{\log(1-|z''|^2)} k(1-u'(s))e^{ks-ku(s)}ds
    \\&= \frac{\mu_k}{2k} \int_{-\infty}^{-\infty} e^q dq = 0. 
	\end{align*}
\end{proof}

We define $\tilde{G}(z,w) := \log|z| + \psi(w)$. 
From Proposition \ref{eq} and Theorem \ref{theoremOT}, we infer that for any holomorphic function $f$ on $V$, there exists a holomorphic function $F$ on $\mathbb{B}^n$ such that $F|_V = f$ and
	\begin{align*}
	\int_{\mathbb{B}^n} |F|^2 e^{-\varphi} &\le \int_{|z'|^2+|z''|^2<1} |f(0,z'')|^2 e^{-\varphi}.
	\end{align*}
Therefore, in this case, by the above inequality and the mean value inequality, we can obtain that the $L^2$-minimum extension with respect to $\varphi$ is $F(z) = f(0,z'')$ in holomorphic functions on $\mathbb{B}^n$ with $F|_V=f$. 

In the general case, for any $\epsilon > 0$, we define $ku_{\epsilon}(\log|z'|^2) := \varphi(z) - \epsilon \log (1-|z''|^2-|z'|^2)$.
Then $u_{\epsilon}$ is a strictly increasing and satisfies that for fixed $z''$, $\displaystyle \lim_{t \to \log(1-|z''|^2)-0} u_{\epsilon}(t) = \infty$. 
Therefore, for any $\epsilon>0$, we can obtain that the $L^2$-minimum extension with respect to $ku_{\epsilon}(\log|z|^2)$ is $F(z) = f(0,z'')$ in holomorphic functions on $\mathbb{B}^n$ with $F|_V=f$. 
Finally, by $\epsilon \to 0$, we get the conclusion. 	

	\vskip3mm
	{\bf Acknowledgment. } The author would like to thank Prof.\ Ryoichi Kobayashi and Dr.\ Genki Hosono for valuable comments. 
	
\bibliographystyle{plain}

\end{document}